\newcommand*{\floorfrac}[2]{\mathopen{}\left\lfloor\frac{#1}{#2}\right\rfloor\mathclose{}}
\newcommand*{\abs}[1]{\lvert #1\rvert}
\newcommand{\eps}{\varepsilon}
\newcommand{\diam}{diam}
\newcommand{\floor}[1]{\left\lfloor #1 \right\rfloor}
\newcommand{\ceil}[1]{\left\lceil #1 \right\rceil}
\newcommand{\kg}{(k,g)}
\newcommand{\kgd}{(k;g,d)}
\newcommand{\powerSum}[3]{\ensuremath{\sum_{i=#1}^{#2} (#3)^i}}
\newcommand{\degree}[1]{\textrm{deg}({#1})}
\newcommand{\edge}[2]{#1 #2}
\newcommand{\algoName}[1]{\emph{#1}}
\newcommand{\numCombsKgdGenerated}{177}
\newcommand{\numOrderGenDeterminedNew}{107}
\newcommand{\numExhGen}{123}
\newtheorem{theorem}{Theorem}
\newtheorem{proposition}{Proposition}
\newtheorem{corollary}{Corollary}
\newtheorem{lemma}{Lemma}
\newtheorem{claim}{Claim}
\newtheorem{question}{Question}
\newtheorem{definition}{Definition}
\author[S.\ Cambie et al.]{Stijn Cambie\affiliationmark{1}\thanks{Supported by a Postdoctoral Fellowship of the Research Foundation Flanders (FWO) with grant number 1225224N.}
  \and Jan Goedgebeur\affiliationmark{1,2}\thanks{Supported by Internal Funds of KU Leuven and a grant of the Research Foundation Flanders (FWO) with grant number G0AGX24N.}\\
  \and Jorik Jooken\affiliationmark{1}\thanks{Supported by a Postdoctoral Fellowship of the Research Foundation Flanders (FWO) with grant number 1222524N.}
    \and Tibo Van den Eede\affiliationmark{1}\textsuperscript{\textdagger}}
\title{On the order-diameter ratio of girth-diameter cages}
\affiliation{
  Department of Computer Science, KU Leuven Campus Kulak-Kortrijk, Kortrijk, Belgium\\
  Department of Mathematics, Computer Science and Statistics, Ghent~University, Ghent, Belgium}
\keywords{cage problem \and degree diameter problem \and extremal graphs \and generation algorithm.}
\begin{document}

\publicationdata{vol. 28:4, SOFSEM 2026}{2026}{1}{10.46298/dmtcs.17595}{2026-02-27; None}{2026-06-20}

\maketitle
\begin{abstract}
 For integers $k,g,d$, a $(k;g,d)$-cage (or simply girth-diameter cage) is a smallest $k$-regular graph of girth $g$ and diameter $d$ (if it exists). The order of a $(k;g,d)$-cage is denoted by $n(k;g,d)$. We determine asymptotic lower and upper bounds for the ratio between the order and the diameter of girth-diameter cages as the diameter goes to infinity. We also prove that this ratio can be computed in constant time for fixed $k$ and $g$.

 We theoretically determine the exact values $n(3;g,d)$, and count the number of corresponding girth-diameter cages, for $g \in \{4,5\}$.
 Moreover, we design and implement an exhaustive graph generation algorithm and use it to determine the exact order of several open cases and obtain -- often exhaustive -- sets of the corresponding girth-diameter cages. The largest case we generated and settled with our algorithm is a $(3;7,35)$-cage of order 136. 
\end{abstract}

\section{Introduction}

In this paper, we study girth-diameter cages, a concept recently introduced by Araujo-Pardo et al.~\citep{ACGKL25}, which is closely related to the Cage Problem and also has connections to the Degree Diameter Problem. These are two important problems in the area of extremal graph theory. We first introduce these two problems before discussing girth-diameter cages.

Given integers $k \geq 2$, $g \geq 3$, the \textit{Cage Problem} asks for the minimum order of a $k$-regular graph of girth $g$ (denoted by $n(k,g)$) and the corresponding graphs (called \textit{$(k,g)$-cages}). 
This problem started receiving a lot of attention after Sachs~\citep{sachs1963regular} proved that $k$-regular graphs with girth $g$ (called \textit{$(k,g)$-graphs}) exist for every value of $k \geq 2$ and $g \geq 3$. 
The exact value of $n(k,g)$ has only been determined for a few infinite families and for some specific small pairs $(k,g)$ (mainly using clever algorithmic searches~\citep{EJ08}), but remains wide open for many pairs $(k,g)$. Apart from some exceptional cases, the best upper bounds on $n(k,g)$ are given by an explicit construction due to Lazebnik, Ustimenko and Woldar~\citep{LUW97}. The state of the art as such tells us that $M(k,g) \le n(k,g) \le C \cdot M(k,g)^{3/2} $ for some constant $C$, where $M(k,g)$ denotes the well-known Moore bound (also presented later in~\eqref{eq:Mbound}) for $k$-regular graphs of girth $g$. 

The \text{Degree Diameter Problem} asks for the order of the largest graph of given maximum degree $\Delta$ and diameter $d$ (denoted by $n'(\Delta,d)$) as well as the corresponding graphs. The Cage Problem and the Degree Diameter Problem are connected through the Moore bound, which simultaneously gives a lower bound for $n(k,g)$ as well as an upper bound for $n'(\Delta,d)$.

For more detailed information, we refer the interested reader to the Dynamic Cage Survey by Exoo and Jajcay~\citep{EJ08} on the Cage Problem and to the excellent survey by Miller and \v{S}ir\'a\v{n}~\citep{MS12} on the Degree Diameter Problem.

A \textit{$(k;g,d)$-graph} is a $k$-regular graph of girth $g$ and diameter exactly $d$. Analogously to the classical cage problem, a smallest $(k;g,d)$-graph is called a \textit{$(k;g,d)$-cage} (or simply a \textit{girth-diameter cage} if the values of $k, g$ and $d$ are not specified) and its order is denoted by $n(k;g,d)$. The authors of~\citep{ACGKL25} mainly studied the case $n(k;5,4)$. We will be interested in the exact determination of $n(k;g,d)$ for certain specific tuples $(k,g,d)$ as well as determining the smallest real number $f(k,g)$ such that, as the diameter $d$ goes to infinity, we have $n(k;g,d) \leq f(k,g)d+O_{k,g}(1).$ Here, $O_{k,g}(1)$ is a constant depending only on $k$ and $g$. In cases where the exact determination of $f(k,g)$ or $n(k;g,d)$ is not possible, we will be interested in lower and upper bounds instead.

Our main theorem can be stated as follows and is proven in~\cref{sec:main}.

\begin{theorem}\label{thm:general_estimate}
    For all integers $k,g \geq 3$, 
    we have $\frac{M(k,g)}{g} \leq f(k,g)$ and $f(k,g) \leq \frac{n(k,g)}{g}.$ Moreover, neither of the two bounds are sharp for all pairs $(k,g)$.
\end{theorem}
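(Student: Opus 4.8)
The plan is to sandwich $f(k,g)$ between the two stated quantities by proving, separately, the lower estimate $n(k;g,d)\ge \frac{M(k,g)}{g}\,d-O_{k,g}(1)$ and the upper estimate $n(k;g,d)\le \frac{n(k,g)}{g}\,d+O_{k,g}(1)$. Since $f(k,g)$ is by definition the least slope admissible in an estimate of the form $n(k;g,d)\le f(k,g)\,d+O_{k,g}(1)$, the first inequality forces $f(k,g)\ge M(k,g)/g$ and the second forces $f(k,g)\le n(k,g)/g$. The non-sharpness claim is then obtained by exhibiting, for each bound, a pair at which it is strict.

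For the lower bound, let $G$ be a $(k;g,d)$-graph, fix vertices $u,v$ with $d_G(u,v)=d$ and a geodesic $x_0,x_1,\dots,x_d$, and set $r=\lfloor (g-1)/2 \rfloor$. When $g=2r+1$ is odd I would take the centres $x_0,x_g,x_{2g},\dots$ spaced $g$ apart and consider the balls $B_r(x_{jg})$. A vertex lying in two of these balls would put their centres at distance at most $2r<g$, contradicting that subpaths of a geodesic are geodesics; hence the balls are pairwise disjoint. Because $G$ is $k$-regular of girth $g>2r$, each ball is a tree and thus contains exactly $M(k,g)=1+k\sum_{i=0}^{r-1}(k-1)^i$ vertices, so summing over the $\lfloor d/g\rfloor+1$ centres gives $|V(G)|\ge \frac{M(k,g)}{g}\,d$. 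For even $g=2r$ the same scheme runs with the $(r-1)$-neighbourhoods of the edges $x_{jg}x_{jg+1}$, which are pairwise disjoint and each of size $M(k,2r)=2\sum_{i=0}^{r-1}(k-1)^i$. In both parities this yields $f(k,g)\ge M(k,g)/g$.

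For the upper bound I would build an explicit family of $(k;g,d)$-graphs by chaining copies of a $(k,g)$-cage $C$. Choose an edge $uv$ of $C$ lying on a $g$-cycle and avoided by some other $g$-cycle (such an edge exists since a cage has many shortest cycles); in each copy $C^{(i)}$ delete the corresponding edge $u_iv_i$, add the bridge $v_iu_{i+1}$ for $i=1,\dots,t-1$, and cap the two remaining degree-$(k-1)$ vertices at the ends with a bounded gadget. Since the macroscopic structure is a path, every bridge is a cut edge, so every cycle lies inside a single copy; a surviving $g$-cycle then keeps the girth equal to $g$ while regularity is restored. For the diameter, note that $d_{C-uv}(u,v)=g-1$ (the edge lay on a $g$-cycle, and girth $g$ forbids any shorter $u$–$v$ detour); as $u_i,v_i$ are the only gateways of copy $i$, traversing an interior copy costs exactly $g-1$ and the bridge costs $1$, so the chain advances by exactly $g$ per copy. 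A chain of $t$ copies therefore has order $t\,n(k,g)+O_{k,g}(1)$ and diameter $gt+O_{k,g}(1)$, giving $f(k,g)\le n(k,g)/g$, with intermediate diameters reached by an $O_{k,g}(1)$ adjustment at one end.

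Finally, for non-sharpness it suffices to produce a pair at which each inequality is strict; a single non-Moore pair such as $(3,7)$, where $M(3,7)=22<24=n(3,7)$, may serve for both if one shows $M(k,g)/g<f(k,g)<n(k,g)/g$. Strictness of the upper bound would follow from a construction more efficient than the naive cage-chain (beating $n(k,g)$ vertices per unit while still advancing the diameter by $g$), and strictness of the lower bound from a refined packing argument showing that the Moore-balls along a geodesic cannot tile without forcing additional vertices. I expect this to be the main obstacle: both generic bounds coincide with $f(k,g)$ precisely for Moore cages (where $M=n$), so separating $f(k,g)$ from either bound demands case-specific constructions and lower-bound refinements, plausibly supported by the exact values of $n(k;g,d)$ computed elsewhere in the paper.
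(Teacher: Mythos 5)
Your first two bounds follow essentially the paper's route: packing disjoint Moore trees along a diametral geodesic is equivalent to the paper's argument that every $g$ consecutive distance layers from an endpoint of a diametral pair contain at least $M(k,g)$ vertices (Proposition~\ref{prop:imprLowerBound}), and your cage-chaining construction is the paper's Proposition~\ref{prop:upp_bnd}. One repair is needed there: your capping step fails for even $k$. A gadget with exactly one vertex of degree $k-1$ and all other vertices of degree $k$ has odd degree sum when $k$ is even, so no such gadget exists and you cannot cap each end of the chain separately. The paper avoids this by treating even $k$ differently (splitting a vertex of the cage into a degree-$2$ and a degree-$(k-2)$ vertex, and building end gadgets from a $(k,g+2)$-cage with a matching added on the neighbours of a deleted vertex); for odd $k$ it uses a $(k,g+1)$-cage with a matching on $k-1$ neighbours of a deleted vertex. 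This is fixable, but as written your construction does not produce a $k$-regular graph for even $k$.

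The genuine gap is the third claim. The theorem asserts that neither bound is sharp for all pairs $(k,g)$, and your final paragraph only describes what a proof would have to accomplish rather than giving one; you even flag it as ``the main obstacle.'' The paper proves it with two separate, substantive arguments. For the lower bound it shows $f(3,7)>22/7$: if equality held, some window of $7$ consecutive layers in a large $(3;7,d)$-cage would have total size exactly $22$ together with the next seven windows, forcing the layer sizes to be periodic with period $7$; pigeonhole gives a layer of size at most $3$; the size-$1$ case lets one identify two vertices to obtain a $(3,7)$-graph of order $22$, contradicting $n(3,7)=24$; and the size-$2$ and size-$3$ cases are excluded by exhaustive computer generation of the possible induced substructures. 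For the upper bound it exhibits an explicit $76$-vertex girth-$7$ graph, $4$-regular except for two degree-$2$ vertices at distance $8$, whose chaining yields $f(4,7)\le 75/8<67/7=n(4,7)/7$. Neither ingredient is routine or derivable from your sketch, so as it stands the proposal proves only two of the three assertions of the theorem.
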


The exact value of $n(k;g,d)$ is open for most cases except for some notable examples from the literature that were already known (and are implied by our result). When the girth equals $4$, using $n(k,4)=M(k,4)=2k$,~\cref{thm:general_estimate} implies that $n(k;4,d) = \frac k 2d+O_k(1)$, which is already known by~\cite[Thm.~2]{EPPT89} (it is stated for $K_3$-free graphs).
Also without girth condition, up to the $O_k(1)$ term, sharpness of the ratio $\frac{M(k, 3)}{3}=\frac{k+1}{3}$ was already known by classical results in~\citep{EPPT89}, and made precise by~\citep{Knor2014}. 

Note that if a $(k,g)$-Moore graph exists, i.e., $M(k,g)=n(k,g)$, then $n(k; g,d)$ is determined up to a constant by~\cref{thm:general_estimate}.
This is the case when $g \in \{3,4\}$, $g=5$ and $k \in\{ 2, 3, 7\}$ and possibly $k=57$ (the existence of such a Moore graph is a famous open problem~\citep{EJ08}), and $g \in \{6,8,12\}$ for some values of $k$ (e.g.\ when $k-1$ is a prime power).
So the smallest cases (for $k$ resp.\ $g$) for which the bounds are not asymptotically determined, are $(k,g)\in\{(3,7),(4,5)\}$. We will study these pairs in~\cref{sec:determined_bounds}.

We also prove in \cref{sec:thmFiniteTask} that for all integers $k,g \geq 3$, determining $f(k,g)$ is a task that requires only finitely many computations\footnote{We remark that this is mainly of theoretical importance, since for many $(k,g)$ this finite number is extremely large.}. 

\begin{theorem}\label{thm:finiteTask}
    For all integers $k,g \geq 3$, one can compute $f(k,g)$ in $O_{k,g}(1)$ time.
\end{theorem}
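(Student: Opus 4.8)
The plan is to realise $f(k,g)$ as the minimum mean weight of a cycle in a finite weighted digraph $\mathcal{C}=\mathcal{C}(k,g)$ that encodes the local structure of a $(k;g,d)$-graph along a diametral path, and then to evaluate it with a standard minimum-mean-cycle algorithm (e.g.\ Karp's) in time polynomial in $|\mathcal{C}|=O_{k,g}(1)$. By \cref{thm:general_estimate} we already know $f(k,g)\in[M(k,g)/g,\,n(k,g)/g]$, so only the exact value inside this bounded interval is at stake. To set up $\mathcal{C}$, fix a radius $R=R(k,g)$ of order $g$ and, for a $(k;g,d)$-graph with a fixed shortest path $v_0v_1\cdots v_d$ realising the diameter, let the \emph{state} at position $i$ be the rooted isomorphism type of the ball $B(v_i,R)$ together with the bounded data recording how $v_i$ lies on the geodesic (its neighbours towards $v_{i-1}$ and $v_{i+1}$). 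Since $G$ is $k$-regular, such a ball has at most $1+k\sum_{j<R}(k-1)^j$ vertices, so there are only $O_{k,g}(1)$ states; these are the vertices of $\mathcal{C}$. I place an edge between two states exactly when they are \emph{locally consistent}, i.e.\ when they can occur as the radius-$R$ neighbourhoods of two adjacent vertices of a geodesic inside some $k$-regular graph of girth $g$; because girth is a radius-$\lceil g/2\rceil$ condition and one-step geodesic consistency is a bounded-distance condition, legality is decidable from the two endpoints alone. Each edge gets length $1$ (it advances the diameter by one) and weight equal to the number of vertices charged to $v_i$ by a fixed local rule (say, vertices within radius $R$ whose nearest spine vertex is $v_i$, ties broken by index), determined by the state.

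For the lower bound, along the geodesic spine of any $(k;g,d)$-cage the state sequence $s_0,\dots,s_d$ is, by construction, a walk of length $d$ in $\mathcal{C}$, and $n(k;g,d)=|V(G)|$ is at least its total weight, since these weights partition the vertices up to the $O_{k,g}(1)$ vertices near the two ends. Hence $n(k;g,d)$ is bounded below by the minimum weight of a length-$d$ walk in $\mathcal{C}$, which for large $d$ is $\mu\cdot d-O_{k,g}(1)$, where $\mu$ is the minimum mean weight of a cycle of $\mathcal{C}$; this gives $f(k,g)\ge\mu$.

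For the upper bound, a cycle of $\mathcal{C}$ attaining $\mu$ is, by the very definition of legality, realisable: its states glue into a $k$-regular girth-$g$ graph, and repeating the cycle $\Theta(d)$ times, capped by $O_{k,g}(1)$ boundary gadgets to close the ends and fix regularity and parity, yields a family of order $\mu\cdot d+O_{k,g}(1)$. The delicate point is that this family must keep diameter $\Theta(d)$ rather than collapse through a shortcut; I enforce this by building into legality a \emph{per-step monotonicity} certificate guaranteeing that any path crossing one copy of the block advances the spine coordinate by at most a bounded amount, so that telescoping forbids long-range shortcuts. This yields $n(k;g,d)\le\mu\cdot d+O_{k,g}(1)$, hence $f(k,g)\le\mu$, and therefore $f(k,g)=\mu$ (in particular the limit $\lim_{d\to\infty}n(k;g,d)/d$ exists, subsuming any Fekete-type argument).

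The hard part will be calibrating the legality relation so that it is at once \emph{necessary} -- the geodesic spine of every cage induces a legal walk, as required by the lower bound -- and \emph{sufficient} -- every optimal cycle builds a genuine large-diameter graph, as required by the upper bound. The finiteness of $\mathcal{C}$ is the routine ingredient, following from girth together with regularity bounding the radius-$R$ balls; the real work is the diameter control, namely ruling out shortcuts in the periodic construction through a local per-block monotonicity certificate and verifying that geodesics in actual cages satisfy exactly this certificate. Once $\mathcal{C}$ is defined, computing $\mu=f(k,g)$ is a single minimum-mean-cycle computation running in $O_{k,g}(1)$ time, which proves the theorem.
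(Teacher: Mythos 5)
Your overall strategy --- encode the local structure along a diametral path as states of a finite transfer digraph and read $f(k,g)$ off as a minimum mean cycle --- is the same in spirit as the paper's, which works instead with \emph{repeatable graphs}: honest induced subgraphs of consecutive distance layers $N_0,\ldots,N_d$ whose first and last $g$ layers are level-preservingly isomorphic, and shows that $f(k,g)$ is the minimum of $\frac{|N_0\cup\ldots\cup N_{d-g}|}{d+1-g}$ over the $O_{k,g}(1)$ such graphs. But your write-up has two genuine gaps, and you flag the second yourself as ``the hard part'' without resolving it; since that part \emph{is} the content of the theorem, the proof is incomplete. First, the lower-bound accounting: you charge to each spine vertex $v_i$ the vertices within radius $R$ whose nearest spine vertex is $v_i$ and assert that these weights ``partition the vertices up to $O_{k,g}(1)$''. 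That is unjustified --- a vertex at distance $i$ from $v_0$ need not lie anywhere near $v_i$ or near the geodesic at all, so your weights can undercount $|V(G)|$ by an amount growing with $d$, and the inequality $n(k;g,d)\ge(\text{total weight of the walk})$ fails. The paper counts instead with the distance layers $N_i(u)$ from one endpoint, which do partition $V(G)$, and gets the lower bound by a removal argument: excise a repeatable subgraph of minimum length, reglue, and derive $n'<0$ if every repeatable pattern had ratio bounded away above $f(k,g)$.

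Second, the legality relation of $\mathcal{C}$ is never defined, yet everything hinges on it being simultaneously (a) loose enough that the spine of every cage induces a legal walk and (b) tight enough that every minimum-mean legal cycle is realisable as a $k$-regular girth-$g$ graph whose diameter does not collapse under periodic repetition. Pairwise local consistency of radius-$R$ ball types does not give (b): a cyclic sequence of pairwise-consistent types need not glue into any graph at all, and even when it does you must still exclude long-range shortcuts; your ``per-step monotonicity certificate'' is named but not constructed. The paper sidesteps both problems by taking its patterns to be actual induced subgraphs equipped with an explicit isomorphism between the two ends, so the periodic gluing in \cref{prop:repeatableGraphYieldsUpperBound} is automatic, the layer structure controls distances, and \cref{lem:extensionLemma1,lem:extensionLemma2} close the two ends with only $O_{k,g}(1)$ extra vertices. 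To repair your argument you would essentially have to reinvent that definition, at which point the minimum-mean-cycle machinery buys nothing beyond the paper's pigeonhole over the finitely many repeatable graphs.
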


In~\cref{sec:determined_bounds}, we compute several previously unknown values of $n(k;g,d)$, starting with the theoretical determination of $n(3;4,d)$ and $n(3;5,d)$, as well as the number of corresponding girth-diameter cages.
Additionally, we design and implement an exhaustive generation algorithm for $(k;g,d)$-graphs. This allowed us to determine the values of $n(k;g,d)$ for $\numOrderGenDeterminedNew$ new triples $(k,g,d)$ and the corresponding $(k;g,d)$-cages (often exhaustive).
The results derived from the computations are listed in Appendix~\ref{sec:tables_generator}.

Finally, in~\cref{sec:conclusion} we conclude with some further observations and open questions, in particular a question relating $(k;g,d)$-cages to the conjecture that every even girth $(k,g)$-cage is bipartite.

\section{Proof of~\cref{thm:general_estimate}}\label{sec:main}

In this section, we prove \cref{thm:general_estimate} in three parts separately. First, we prove a lower bound $M'\kgd$ on $n\kgd$ in \cref{prop:imprLowerBound} which implies that $\frac{M(k,g)}{g} \leq f(k,g)$. As a side result, we also prove a better lower bound $M''\kgd$ for $d \leq g$ where $g$ is even, i.e., $g=2t \geq 4$ for some $t$, in \cref{prop:imprLowerBound_specialimprovement}. The proofs of these lower bounds contain the underlying ideas for the generation algorithm described in Subsection~\ref{sec:gen_algo}. Second, we prove $ f(k,g) \leq \frac{n(k,g)}{g}$ in \cref{prop:upp_bnd}. Third, we prove that neither of the two bounds on $f(k,g)$ are sharp for all pairs $(k,g)$ in \cref{prop:main_not_sharp}.

Recall the Moore bound $M\kg$, defined by
\begin{equation}\label{eq:Mbound}
	n\kg \geq M\kg =
	\begin{cases}
		1 + k \powerSum{0}{t-1}{k-1} & \text{for } g=2t+1 \\
		2 \powerSum{0}{t-1}{k-1} & \text{for } g=2t . \\
	\end{cases}
\end{equation}

This bound is obtained by realising that for any vertex $u$ in a $(k,g)$-graph, there cannot be an edge between two vertices $v$ and $w$ if the sum of the distance between $u$ and $v$ and between $u$ and $w$ is strictly smaller than $g-1$. Hence, locally around every vertex $u$ in a $(k,g)$-graph, the graph looks like a tree of order $M(k,g)$ (and we call this tree a Moore tree). We will furthermore split $M(k,2t+1)$ in the sum of the sizes of even and odd neighbourhoods towards the center of the Moore tree;
$M(k,2t+1)=M_0(k,2t+1)+M_1(k,2t+1)$, where
$$M_0(k,2t+1)= 1+k\sum_{i=0}^{\floorfrac{t-2}{2}} (k-1)^{2i+1} \mbox{ and }
M_1(k,2t+1)= k \sum_{i=0}^{\floorfrac{t-1}{2}}(k-1)^{2i}.$$

Note that it is usually only defined for $k\geq2$ and $g\geq3$, but we allow the formula to be interpreted for any $g > 0$ and further define $M(k,g)=0$ for every $g \le 0.$ 

In~\cref{prop:imprLowerBound}, we give a lower bound for $n\kgd$, which we denote by $M'\kgd$ in analogy to this.
For certain values, this lower bound will turn out to be exact (see~\cref{sec:determined_bounds}). A direct corollary of this theorem is that $f(k,g) \geq \frac{M(k,g)}{g}$.

\begin{proposition}\label{prop:imprLowerBound}
		Suppose $k \geq 3, g \geq 3, d \geq t:= \floor{g/2}$,
		then $n\kgd \geq M'\kgd$, where
		\begin{equation*}
			 M'\kgd = 
			\begin{cases}
				M(k,g) + M(k,2d-2t-1) & \text{for } t \leq d \le 2t, \\
				\noalign{\vskip9pt}
				(r+2)M(k,g)+M(k,s) & \text{for } 
				\vcenter{\hbox{\shortstack[c]{
							$d=2t+1+rg+s \text{ with }$\\
							$r\geq0,\; s\in\{0,\ldots,g-1\}$
				}}}
			\end{cases}
		\end{equation*}
	\end{proposition}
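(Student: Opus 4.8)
The plan is to prove a lower bound on the number of vertices in a $(k;g,d)$-graph by exploiting the interplay between the girth condition and the diameter condition along a "diametral path" — a shortest path realising the diameter $d$.

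**Setup and key idea.** Let me explain what I would prove. The Moore bound tells us that locally around any vertex, the graph looks like a tree of order $M(k,g)$. The diameter condition forces the existence of two vertices $u, w$ at distance exactly $d$. The first key observation I would make is that we can look at vertices spread out along a geodesic (shortest path) $P$ from $u$ to $w$ and count the vertices contributed by the Moore trees rooted at well-separated vertices of $P$. The girth condition guarantees that, locally, the ball of radius $t = \lfloor g/2 \rfloor$ around each vertex is a tree on $M(k,g)$ vertices. The crucial combinatorial point is that if two vertices on the geodesic are far enough apart (distance $\geq g$ suffices to avoid double-counting the entire Moore trees, but one must be careful at the boundary), their Moore balls are essentially disjoint, or at least their overlaps can be controlled.

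**First case ($t \leq d \leq 2t$).** Here I would take $u$ and $w$ at distance $d$. Consider the Moore tree of radius $t$ around $u$, which contributes $M(k,g)$ vertices. I would then argue that $w$ together with the vertices "behind" $w$ — those at distance more than $t$ from $u$ but within the Moore-tree structure emanating from $w$ — contribute the additional term $M(k, 2d-2t-1)$. The index $2d-2t-1$ is exactly designed so that the extra portion accounts for vertices on the far side of $w$ at distances that are not already swept up by the radius-$t$ ball around $u$. I expect the clean way to see this is: vertices within distance $d - t - 1$ of $w$ on the far side are guaranteed to be distinct from the $u$-ball, and a Moore-type tree count of radius roughly $d - t - 1$ gives a tree of order $M(k, 2(d-t-1)+1) = M(k, 2d-2t-1)$. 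Verifying that these two vertex sets are genuinely disjoint (using that distances add correctly along a geodesic and that the girth prevents short cycles from identifying vertices) is the technical heart of this case.

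**General case ($d = 2t+1+rg+s$).** For large $d$, I would place $r+2$ well-separated "anchor" vertices along the geodesic $P$, spaced so that their radius-$t$ Moore balls are pairwise disjoint, yielding $(r+2)M(k,g)$ vertices; the spacing of $g$ between consecutive anchors is precisely what guarantees disjointness while packing as many balls as the length $d$ permits. The leftover segment of length $s$ contributes the residual term $M(k,s)$. The main obstacle — and where I would spend the most care — is the disjointness/double-counting bookkeeping: one must verify that consecutive Moore balls of radius $t$ centred at vertices distance $g$ apart on a geodesic do not share vertices, which follows from the girth lower-bounding the length of any cycle and from distances along the geodesic being additive, but the endpoints and the fractional piece $M(k,s)$ require a careful separate accounting. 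A secondary subtlety is confirming that the floor-based decomposition $d = 2t+1+rg+s$ with $s \in \{0,\dots,g-1\}$ always produces a valid non-negative $r$, and handling the even-girth ($g = 2t$) versus odd-girth ($g = 2t+1$) cases uniformly, since the Moore tree has a slightly different boundary structure in each parity. I would present the argument so that both parities are subsumed by tracking distances via the $M(k, \cdot)$ function with the convention $M(k,g)=0$ for $g \leq 0$ that the paper has already set up.
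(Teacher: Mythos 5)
Your overall strategy is the same as the paper's: split the diameter into an initial and final block plus $r$ blocks of length $g$ and a remainder of length $s$, and charge each block a Moore-type count. The difference is in the implementation, and this is where your sketch has two concrete gaps. First, your disjointness claim for the packed balls is not correct as stated when $g$ is even: two anchors $x,y$ at distance $g=2t$ on a geodesic can perfectly well have a common vertex $z$ with $d(x,z)=d(y,z)=t$ (the resulting closed walk has length $4t$, so the girth does not forbid it), so their radius-$t$ balls need not be disjoint; moreover, for even girth the count $M(k,2t)$ is not obtained from a vertex-centred ball but from the tree rooted at an \emph{edge} with branches of depth $t-1$, so the object you are packing has to change with the parity. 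Second, the residual term $M(k,s)$ is asserted but never located: on the geodesic the leftover segment has length $s\le g-1$ and is partly or wholly contained in the radius-$t$ ball of the last anchor, so you still owe an argument producing $M(k,s)$ \emph{new} vertices disjoint from everything already counted (this can be done by inserting a small anchor in the middle of the leftover and re-running the disjointness computation, but it is exactly the kind of boundary bookkeeping you defer).

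The paper avoids all of this by never packing balls at all. It fixes $u$ with $d(u,v)=d$, works with the BFS layers $N_i=N_i(u)$, and partitions the index set $\{0,\ldots,d\}$ into the blocks $[0,t]$, $r$ blocks of $g$ consecutive indices, one block of $s$ indices, and $[d-t,d]$; disjointness is then automatic because distinct layers are disjoint by definition. All the work is concentrated in one lemma: any $j$ consecutive layers $N_{q+1},\ldots,N_{q+j}$ contain at least $M(k,j)$ vertices, proved by rooting a Moore tree at a vertex of the middle layer ($j$ odd) or at an edge between the two middle layers ($j$ even), which is exactly the parity-uniform treatment you were hoping for. Your first case ($t\le d\le 2t$) is fine and matches the paper's argument, but for the general case you should either switch to the layer decomposition or fully carry out the even-girth edge-rooted disjointness analysis and the placement of the $s$-block.
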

	\newpage
    	\begin{proof}
		
        Let $G$ be a $(k;g,d)$-graph of order $n=n(k;g,d)$ and let $u, v$ be two vertices for which $d(u,v)=d=\diam (G).$
    
        We define the neighbourhoods $N_i := N_i(u)= \{w \in V(G)~|~d(u,w)=i\}$. It holds that $|V(G)| = \sum_{i=0}^{d} |N_i|$. 
    	Since $G$ is a $\kg$-graph, we claim that 
        \begin{equation}\label{eq:Moore}
            \sum_{i=0}^{t} |N_i| \geq M(k,g).
        \end{equation}
    
        To see this, note that if $g$ is odd, $\cup_{ 0\le i \le t} N_i$ spans a (supergraph of a) $k$-ary tree of order $M(k,g),$ as one can see by taking a rooted tree starting from $u$ with height $t$, i.e., by considering a $(k,g)$-Moore tree rooted at the vertex $u$.
        If $g$ is even, the claim follows by considering $\cup_{ 1\le i \le t-1} \left( N_i(u) \cup N_i(z) \right) \subseteq \cup_{ 0\le i \le t} N_i(u)$ for a neighbour $z$ of $u$, i.e., by considering a $(k,g)$-Moore tree rooted at the edge $uz$.

        We now deal with the case $t \leq d \le 2t$. Similarly, by considering a rooted subtree in $v$ of height $d-t-1$, one concludes that $$\sum_{i=0}^{d-t-1} |N_{d-i}| \geq M(k,2(d-t)-1).$$  
        Hence, for $t \leq d \le 2t$, we obtain $|V(G)| \geq M(k,g) + M(k,2(d-t)-1)$.
        
    	Henceforth, we focus on the other case. Let $G'$ be a $\kgd$-graph with $d=2t+1+rg+s$ where $r\geq0, s \in \{0,\ldots,g-1\}$. For every $1 \leq j \leq g$ and for every $0 \leq q \leq d-j$, similarly as before, we conclude that $\sum_{i=1}^{j} |N_{q+i}| \geq M(k,j)$, by considering rooted subtree(s) from a vertex $w \in N_{q+\ceil{\frac{j}{2}}}$ if $j$ is odd, or edge $xy \in N_{q+\frac{j}{2}} \times N_{q+\frac{j}{2}+1}$ if $j$ is even.
        Hence, the theorem follows since
        \begin{equation*}
            \begin{split}
                |V(G')| & =  \sum_{i=0}^{t} |N_i| + \sum_{l=0}^{r-1} \sum_{i=1}^{g} |N_{t+lg+i}|  + \sum_{i=1}^{s} |N_{t+rg+i}|  + \sum_{i=0}^{t} |N_{d-i}| \\		
                & \geq M(k,g) + r M(k,g) + M(k,s) + M(k,g) 	\\
                & = (r+2)M(k,g)+M(k,s) . 
            \end{split} 
        \end{equation*}
    \end{proof}

When the girth $g$ is even, and $d \le g$, the above bounds can be sharpened.

\begin{proposition}\label{prop:imprLowerBound_specialimprovement}
		Suppose $k \geq 3$ and $d \le g=2t \geq 4$, then $n\kgd \geq M''\kgd $, where
        $$M''\kgd =M(k,g)+2 \max\{M_0(k, 2d-2t-1),M_1(k, 2d-2t-1) \}.$$
		Furthermore, this bound can only be attained by bipartite graphs.
	\end{proposition}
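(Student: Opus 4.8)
The plan is to position two Moore-type neighbourhoods, one around $u$ and one around $v$, and then to work hard on the single layer where they meet. Write $t=g/2$ and $h=d-t-1$, fix $u,v$ with $d(u,v)=d=\diam(G)$, and set $N_i=N_i(u)$. Around $u$ the inequality \eqref{eq:Moore} already gives $\sum_{i=0}^{t}|N_i|\ge M(k,g)$. Around $v$ I would rerun that argument with the \emph{smaller} effective girth $2(d-t)\le g$: since $G$ has girth $g=2t\ge 2(d-t)$, an edge-rooted $(k,2(d-t))$-Moore tree based at an edge incident to $v$ really is a tree, so the ball of radius $d-t$ about $v$ contains at least $M(k,2(d-t))$ vertices, all at distance $\ge d-(d-t)=t$ from $u$. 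The target then rewrites as $M''\kgd=M(k,g)+M(k,2(d-t))$, the equivalence with the stated form being the elementary identity $2\max\{M_0(k,2h+1),M_1(k,2h+1)\}=M(k,2h+2)$. This identity is routine: from $M_0+M_1=M(k,2h+1)$ and the alternating-sum evaluation $M_0-M_1=\pm(k-1)^{h}$, together with $M(k,2h+2)=M(k,2h+1)+(k-1)^{h}$, the claim drops out.

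The two neighbourhoods are disjoint apart from the single middle layer $N_t$: the $u$-ball of radius $t-1$ is a forced tree occupying distances $\le t-1$ (so $\sum_{i\le t-1}|N_i|=M(k,2t-1)$ exactly), the tree around $v$ of radius $h$ lives at distances $\ge t+1$, and the remaining overlap consists of geodesic midpoints inside $N_t$. Hence adding the two Moore bounds double counts $N_t$, and the whole game reduces to the single inequality $|N_t|+f\ge (k-1)^{t-1}+(k-1)^{h}$, where $f\ge 0$ is the excess of the far region beyond its radius-$h$ Moore tree; granting this, $n\kgd\ge M(k,2t-1)+M(k,2h+1)+(k-1)^{t-1}+(k-1)^{h}=M''\kgd$. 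I would prove this by double counting the edges incident to $N_t$: the tree around $u$ forces exactly $k(k-1)^{t-1}$ edges from $N_{t-1}$ into $N_t$, and the $v$-side forces a further $k(k-1)^{h}$ edges worth of structure pointing back into $N_t$ (for a purely tree-like far region these are exactly the $(k-1)k(k-1)^{h-1}$ outgoing edges of its leaves). Since every vertex of $N_t$ has degree $k$, dividing by $k$ turns these two contributions into the required surplus $(k-1)^{t-1}+(k-1)^{h}$, with any far-region excess $f$ absorbing the edges that the $v$-side redirects internally rather than back to $N_t$.

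For the bipartiteness assertion I would analyse equality. If $n\kgd=M''\kgd$ then every inequality above is tight; in particular the degree balance at $N_t$, and by the same argument the balance at every layer, leaves no room for an edge with both endpoints in a common layer $N_i$. Thus no edge joins two vertices equidistant from $u$, so the parity of $d(u,\cdot)$ is a proper $2$-colouring and $G$ is bipartite. Conversely, a non-bipartite $G$ contains an odd cycle and hence an intra-layer edge; such an edge consumes degree that the count had allotted to inter-layer edges, so the layer-by-layer estimate becomes strict and $n\kgd>M''\kgd$.

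The main obstacle is the second paragraph, namely quantifying the $v$-side contribution to the edges entering $N_t$. When $d$ is close to $2t$ the far structure is no longer a tree right up to the interface, so the clean ``$k(k-1)^{h}$ incoming edges'' cannot simply be read off leaves; it must instead be extracted from the far-region degree balance, relating the edges from $N_{t+1}$ into $N_t$, the edges internal to the far region, and the far Moore bound, with the girth condition used to cap the internal edges. Making this interface argument hold uniformly in $d$, while keeping the equality case sharp enough to force the absence of intra-layer edges, is where the real work sits; by comparison the identity $M_0-M_1=\pm(k-1)^{h}$ and the reduction to $|N_t|+f\ge(k-1)^{t-1}+(k-1)^{h}$ are bookkeeping.
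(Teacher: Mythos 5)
Your route is genuinely different from the paper's, and its skeleton is sound, but as written it has two real gaps. The paper's proof is a short independence-number argument: the union of alternating layers around $u$ (up to radius $t-1$) and alternating layers around $v$ (up to radius $d-t-1$) is an independent set, a $k$-regular graph satisfies $n \ge 2\alpha(G)$ with equality forcing bipartiteness, and $M(k,g)=2M_\eps(k,2t-1)$ for the right parity $\eps$ — done. You instead superpose two Moore balls and fight at the interface layer $N_t$. Your preliminary bookkeeping is correct: $2\max\{M_0,M_1\}(k,2h+1)=M(k,2h+1)+(k-1)^h=M(k,2h+2)$, the ball of radius $t-1$ around $u$ is exactly a tree, and the whole claim does reduce to $|N_t|+f\ge (k-1)^{t-1}+(k-1)^h$. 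That inequality is in fact provable along the lines you sketch: $e(N_{t-1},N_t)=k(k-1)^{t-1}$ exactly; the $k(k-1)^h$ edges leaving the leaves of the $v$-tree land either in $N_t$ (contributing to $e(N_t,N_{t+1})$) or on the $f$ surplus far vertices, each of which absorbs at most $k$ of them, giving $e(N_t,N_{t+1})+kf\ge k(k-1)^h$; summing with the degree identity $k|N_t|=k(k-1)^{t-1}+2e(N_t)+e(N_t,N_{t+1})$ finishes it. But you explicitly defer exactly this step as "where the real work sits", so the proposal is a plan rather than a proof of the inequality.

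The bipartiteness part contains a genuine error, not just an omission. Tightness in your chain forces $e(N_t)=0$ and controls the surplus far vertices, but it says nothing about intra-layer edges in $N_i$ for $i>t$: your accounting never "allots" degrees in those layers, so an edge inside such a layer does not make your estimate strict, and "the same argument at every layer" does not exist. Consequently the claim that parity of $d(u,\cdot)$ is a proper $2$-colouring is unjustified — two adjacent vertices of the $v$-rooted tree at consecutive $v$-distances may a priori share the same $u$-distance. To repair this you must colour the far side by parity of $d(v,\cdot)$, use the equality analysis to show every far vertex lies in the $v$-tree or is a surplus vertex glued onto $N_h(v)$, and check consistency of the two colourings across $N_t$; none of this is in the proposal. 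This is precisely the kind of case analysis the paper's $n=2\alpha(G)$ argument avoids, since for a $k$-regular graph an independent set of size $n/2$ automatically has an independent complement.
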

    \begin{proof}
        Let $u,v$ be vertices satisfying $d(u,v)=d.$
        Since $ \cup_{i=0}^{t-1} N_i(u)$ and $ \cup_{i=0}^{t-1} N_i(v)$
        span a tree, we can deduce that in each case (depending on $d \pmod 2$ and $t \pmod 2$) the following neighbourhoods form an independent set (and so does their union, since distances between vertices of different neighbourhoods are at least two)
        \begin{align*}
            \left(N_1(u) \cup N_3(u) \cup \ldots \cup N_{t-1}(u) \right) \cup \left(N_1(v) \cup N_3(v) \cup \ldots \cup N_{d-t-1}(v) \right),\\
                \left(N_1(u) \cup N_3(u) \cup \ldots \cup N_{t-1}(u) \right) \cup \left(N_0(v) \cup N_2(v) \cup \ldots \cup N_{d-t-1}(v) \right),\\
				\left(N_0(u) \cup N_2(u) \cup \ldots \cup N_{t-1}(u) \right) \cup \left(N_0(v) \cup N_2(v) \cup \ldots \cup N_{d-t-1}(v) \right),\\
                \left(N_0(u) \cup N_2(u) \cup \ldots \cup N_{t-1}(u) \right) \cup \left(N_1(v) \cup N_3(v) \cup \ldots \cup N_{d-t-1}(v) \right).
        \end{align*}
    
    Since the graph needs to be $k$-regular, its order is at least two times the independence number (and equality implies the bipartiteness), which is at least the sum of sizes of the corresponding neighbourhoods listed above. Here $M(k,g)=2M_\eps(k, 2t-1)$ for $\eps \in \{0,1\}$ satisfying $ 2 \nmid t+\eps$ ($\eps \equiv t-1 \pmod 2$). This can be concluded from the fact that if $M(k,g)$ would be attained, the extremal graph is bipartite and regular, but also from direct verification where one uses $k=1+(k-1).$
\end{proof}

\begin{corollary}
    Suppose $k \geq 3, g \geq 3, d \geq t:= \floor{g/2}$, then $n\kgd \geq M\kgd$, where
    \begin{equation*}
			M\kgd = 
			\begin{cases}
				M''\kgd & \text{for } d \le g=2t, \\
                M'\kgd & \text{otherwise.}
			\end{cases}
		\end{equation*}
\end{corollary}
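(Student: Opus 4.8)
The plan is to read this off directly from the two lower bounds already established, since $M\kgd$ is nothing more than a case-by-case selection between $M'\kgd$ and $M''\kgd$. First I would fix $k \geq 3$, $g \geq 3$ and $d \geq t = \floor{g/2}$, and split according to the two branches in the definition of $M\kgd$.

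In the branch $d \le g = 2t$, I would note that the equation $g = 2t$ forces $g$ to be even, and combined with $g \geq 3$ this gives $g = 2t \geq 4$; these are exactly the hypotheses of \cref{prop:imprLowerBound_specialimprovement}, so it applies and yields $n\kgd \geq M''\kgd$. In the complementary branch --- which covers all odd $g$, as well as even $g$ with $d > g$ --- the standing hypotheses $k \geq 3$, $g \geq 3$, $d \geq t$ of \cref{prop:imprLowerBound} hold, so that proposition applies and gives $n\kgd \geq M'\kgd$. As the two branches are mutually exclusive and together cover the whole range $d \geq t$, this establishes $n\kgd \geq M\kgd$ in all cases.

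I do not expect any genuine obstacle: the content is entirely contained in the two propositions, and the corollary only organises them. The single point I would make explicit is why choosing $M''$ on the overlap region $t \le d \le g = 2t$ is justified as the stronger estimate, i.e.\ that $M''\kgd \geq M'\kgd$ there. This comparison reduces to the splitting $M(k, 2d - 2t - 1) = M_0(k, 2d-2t-1) + M_1(k, 2d-2t-1)$ (legitimate because $2d - 2t - 1$ is odd) together with the trivial bound $2\max\{M_0, M_1\} \geq M_0 + M_1$, so that $M''$ dominates $M'$ on the overlap and the corollary indeed records the best available bound in every case.
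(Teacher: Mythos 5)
Your proposal is correct and matches the paper's (implicit) argument exactly: the corollary is read off directly by applying \cref{prop:imprLowerBound_specialimprovement} on the branch $t \le d \le g = 2t$ and \cref{prop:imprLowerBound} otherwise. Your closing observation that $M''\kgd \geq M'\kgd$ on the overlap, via $M(k,2d-2t-1)=M_0+M_1 \le 2\max\{M_0,M_1\}$, is precisely the remark the paper makes immediately after the corollary to justify the choice of branch.
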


Note here that $M''\kgd \geq M'\kgd$ for all $t \le d \le g=2t \geq 4$, since $M(k,2d-2t-1)=M_0(k,2d-2t-1) + M_1(k,2d-2t-1) \le 2 \max\{ M_0(k,2d-2t-1), M_1(k,2d-2t-1)\}.$

We now discuss a construction leading to an infinite family of $(k,g)$-graphs for which the ratio between the order and the diameter tends to $\frac{n(k,g)}{g}$ as the diameter of the graph tends to infinity.

\begin{proposition}\label{prop:upp_bnd}
    For all integers $k,g \geq 3$, 
    we have $ f(k,g) \leq \frac{n(k,g)}{g}.$
\end{proposition}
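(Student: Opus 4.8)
The plan is to exhibit, for every sufficiently large $d$, a $\kgd$-graph of order at most $\frac{n(k,g)}{g}\,d+O_{k,g}(1)$. Since $f(k,g)$ is by definition the least constant for which $n\kgd \le f(k,g)\,d+O_{k,g}(1)$ as $d\to\infty$, this yields $f(k,g)\le \frac{n(k,g)}{g}$. The basic building block is a $(k,g)$-cage $H$ together with an edge $e=xy$ lying on a cycle of length $g$ (such an edge exists since the girth is realised by some cycle). Then in $H-e$ the endpoints satisfy $d_{H-e}(x,y)=g-1$, because a shorter $x$--$y$ path together with $e$ would create a cycle of length less than $g$.

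First I would build a linear chain: take $m$ disjoint copies $H_1-e_1,\dots,H_m-e_m$ of $H-e$ (writing $x_i,y_i$ for the copies of $x,y$) and, for each $1\le i\le m-1$, add the \emph{bond edge} $y_ix_{i+1}$. Each bond edge is a bridge and hence lies on no cycle, so the girth of the resulting graph equals the girth inside a single copy, namely $g$. Every vertex except $x_1$ and $y_m$ has degree $k$, and the geodesic from $x_1$ to $y_m$ must cross each copy from its $x$ to its $y$ (distance $g-1$) and use each of the $m-1$ bonds, so $d(x_1,y_m)=m(g-1)+(m-1)=mg-1$. As the internal diameter of each copy is $O_{k,g}(1)$, the whole graph has diameter $mg+O_{k,g}(1)$ and order $m\,n(k,g)=\frac{n(k,g)}{g}\bigl(mg\bigr)$, i.e.\ order $\frac{n(k,g)}{g}\cdot(\text{diameter})+O_{k,g}(1)$, which is exactly the target ratio.

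It then remains to repair the two vertices $x_1,y_m$ of degree $k-1$ so that the graph becomes $k$-regular, and to realise \emph{every} large $d$ rather than only those of the form $mg+O(1)$. For the latter I would splice a constant-size spacer, having two connection vertices of degree $k-1$ (an even deficiency, hence available for all $k$), into one bond, increasing the length by any prescribed amount in $\{0,1,\dots,g-1\}$ at the cost of $O_{k,g}(1)$ extra vertices; since consecutive lengths obtained by varying $m$ differ by $g$, this covers all sufficiently large $d$ within the stated order bound. For the degree repair, when $k$ is odd one can cap each loose end independently with a constant-size gadget containing a single degree-$(k-1)$ vertex (which exists precisely because $k$ is odd), chosen so that the new edges only close cycles of length $\ge g$; this perturbs the order and the diameter by $O_{k,g}(1)$ and leaves the ratio unchanged.

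The step I expect to be the main obstacle is making the graph exactly $k$-regular when $k$ is even. In an even-regular graph every edge cut has even size, since $e(A,\bar A)=k|A|-2e(A)$; hence there are no bridges, and the chain above (whose bonds are bridges and whose ends each miss a single edge) cannot be completed either by capping the two ends independently or by joining them directly, the latter turning the chain into a cycle and thereby halving the diameter and doubling the ratio to $2n(k,g)/g$. Resolving this seems to require a genuinely non-local modification that keeps all cuts even while preserving both the girth $g$ and the per-link stretch $g$, and it is most delicate for small, dense cages, where there is little room to reroute. I would treat this even-$k$ regularisation as the technical core of the argument, with the clean ratio computation for the linear chain serving as the conceptual skeleton.
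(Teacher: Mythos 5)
Your chain construction for odd $k$ is sound and essentially coincides with the paper's: remove an edge of a shortest cycle from a $(k,g)$-cage, so that its endpoints are at distance exactly $g-1$, chain $m$ copies by bridge edges to get order $m\,n(k,g)$ against diameter $mg+O_{k,g}(1)$, and cap the two degree-$(k-1)$ ends with constant-size gadgets. Your parity analysis (a single degree-$(k-1)$ defect is realisable only for odd $k$; an even-regular graph has no bridges, so the bonded chain cannot be regularised) is also correct. But this means the proposal proves the proposition only for odd $k$: you explicitly leave the even-$k$ case as an unresolved ``technical core,'' and since the statement is claimed for all $k\ge 3$, this is a genuine gap, not a routine detail.

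The missing idea is that the parity obstruction only forbids \emph{bridges}, not \emph{cut vertices}, so the copies should be glued at a vertex rather than along an edge. Concretely, the paper splits a vertex $u$ of the cage $M$ (chosen off some shortest cycle) into a vertex $v$ adjacent to $2$ of $u$'s neighbours and a vertex $w$ adjacent to the other $k-2$; chaining copies by \emph{identifying} $w_{i-1}$ with $v_i$ produces a cut vertex of degree $2+(k-2)=k$. Any $v$--$w$ path in a single split copy corresponds to a cycle through $u$ in $M$, hence has length at least $g$, so the per-copy stretch is $g$ and the ratio is again $n(k,g)/g$. The two loose ends now have degrees $2$ and $k-2$ (even deficiencies $k-2$ and $2$), and they are repaired by identification with suitable constant-size gadgets (e.g.\ a $(k,g+2)$-cage minus a vertex and an edge with a matching added, leaving one vertex of degree $k-2$; and a copy of $M$ with one edge subdivided, leaving one vertex of degree $2$), which again introduces no bridge. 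Without some such vertex-identification device, your even-$k$ case does not go through, so as written the proposal establishes the bound only for odd $k$.
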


\begin{proof}
    Let $M$ be a $(k,g)$-cage of order $n(k,g).$
    
    If $k$ is even, let $M'$ be the graph obtained by removing a vertex $u$ (disjoint from at least one shortest cycle of $M$), adding two vertices $v$ and $w$ and adding edges between $v$ and $2$ of the initial neighbours of $u$ and edges between $w$ and the $k-2$ remaining initial neighbours of $u$. Let $r \geq 1$ be an integer and consider the graph $M'_r$ obtained by taking $r$ copies of $M'$ (where vertices in copy $i$ receive the subscript $i$) and identifying $v_i$ with $w_{i-1}$ for each $2 \leq i \leq r$. Note that $|V(M'_r)|=r|V(M)|+1$, that $M'_r$ has girth $g$, that the distance between $v_1$ and $w_r$ is at least $gr$ and that every vertex in $M'_r$ has degree $k$, except for $v_1$ (having degree $2$) and $w_r$ (having degree $k-2$). Let $G'$ be the graph obtained by taking a $(k,g+2)$-cage, removing a vertex $x$, removing an edge $yz$ where $y$ was initially a neighbour of $x$ and adding a matching of size $k/2$ between the vertices of degree $k-1$. Note that $G'$ has girth at least $g$ and all vertices have degree $k$, except for $y$ (having degree $k-2$). We can complete $M'_r$ to a $(k,g)$-graph $G''$ by identifying $y$ with $v_1$ and adding another copy of $M$ with one edge subdivided, and identifying this additional vertex (of degree $2$) with $w_{r}$. Now $G''$ is $k$-regular, the diameter of $G''$ is at least as large as the diameter of $M'_r$, $G''$ has girth $g$ and for $r$ tending to infinity, the ratio of its order and diameter tends to $\frac{n(k,g)}{g}.$

    If $k$ is odd, let $M'$ be the graph obtained by removing an edge $vw$ (disjoint from at least one shortest cycle) from $M$. Let $r \geq 1$ be an integer and consider the graph $M'_r$ obtained by taking $r$ copies of $M'$ and adding an edge between $v_i$ and $w_{i-1}$ for each $2 \leq i \leq r$. Now $|V(M'_r)|=r|V(M)|$, $M'_r$ has girth equal to $g$, the distance between $v_1$ and $w_r$ is at least $gr-1$ and each vertex in $M'_r$ has degree $k$, except for $v_1$ and $w_r$, which each have degree $k-1$. Let $G'$ be the graph obtained by taking a $(k,g+1)$-cage, removing a vertex $x$ and adding a matching of size $\frac{k-1}{2}$ between $k-1$ of the initial neighbours of $x$ (so $G'$ has girth at least $g$ and all vertices, except for an initial neighbour $y$ of $x$, have degree $k$). Let $G''$ be the graph obtained by taking $M'_r$, taking two copies $G_1'$ and $G_2'$ of $G'$, adding an edge between $y_1$ and $v_1$ and an edge between $y_2$ and $w_r$. Again, $G''$ is a $k$-regular graph with girth $g$ for which the ratio between its order and diameter tends to $\frac{n(k,g)}{g}$ as $r$ tends to infinity. An illustration of this is presented in~\cref{fig:Prop3Illus}.
    Here the matching of order $(3-1)/2=1$ in $G'$ is the dashed line, and dotted lines are used for the edge connecting $M'_r$ and $G'$, as well as the edges $v_iw_{i-1}.$
\end{proof}

    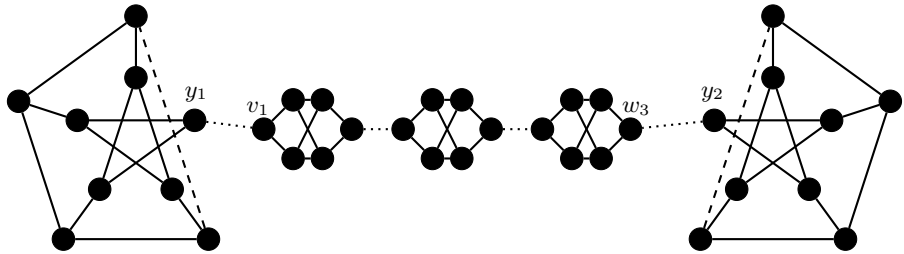
\begin{figure}[h]
    \centering
\begin{tikzpicture}[scale=0.39, main_node/.style={fill,draw,minimum size=0.21,circle,inner sep=3pt]}]

\node[main_node] (0) at (-0.8571428571428563, 4.857142857142858) {};
\node[main_node] (1) at (-4.857142857142858, 1.9509727452415901) {};
\node[main_node] (3) at (-0.8571428571428563, 2.7542184086770387) {};
\node[main_node] (4) at (-3.329278812161496, -2.751309273284966) {};
\node[main_node] (5) at (-2.857142857142856, 1.3011333527264064) {};
\node[main_node] (6) at (1.6149930978757807, -2.751309273284966) {};
\node[main_node] (7) at (0.3789251203664623, -1.050007656536872) {};
\node[main_node] (8) at (-2.093210834652175, -1.050007656536872) {};
\node[main_node] (9) at (1.1428571428571408, 1.3011333527264064) {};

\node (40) at (18,1){};

\node () at (1.2,2.20) {$y_{1}$};
\node () at (18.8,2.20) {$y_{2}$};
\node () at (3.3,1.70) {$v_{1}$};
\node () at (16.2,1.70) {$w_{3}$};
\node[main_node] (14) at (3.5,1){};
\node[main_node] (15) at (4.5,0){};
\node[main_node] (16) at (4.5,2){};
\node[main_node] (18) at (5.5,2){};
\node[main_node] (17) at (5.5,0){};
\node[main_node] (19) at (6.5,1){};

\node[main_node] (24) at (8.25,1){};
\node[main_node] (25) at (9.25,0){};
\node[main_node] (26) at (9.25,2){};
\node[main_node] (28) at (10.25,2){};
\node[main_node] (27) at (10.25,0){};
\node[main_node] (29) at (11.25,1){};

\node[main_node] (34) at (13,1){};
\node[main_node] (35) at (14,0){};
\node[main_node] (36) at (14,2){};
\node[main_node] (38) at (15,2){};
\node[main_node] (37) at (15,0){};
\node[main_node] (39) at (16,1){};

 \path[draw, thick]
(6) edge[dashed] node {} (0) 
(9) edge[dotted] node {} (14) 
(19) edge[dotted] node {} (24)
(29) edge[dotted] node {} (34)

\foreach \i/\j in {14/15,14/16,15/17,16/17,15/18,16/18,17/19,18/19}{(\i) edge node {} (\j) }

\foreach \i/\j in {24/25,24/26,25/27,26/27,25/28,26/28,27/29,28/29}{(\i) edge node {} (\j) }

\foreach \i/\j in {34/35,34/36,35/37,36/37,35/38,36/38,37/39,38/39}{(\i) edge node {} (\j) }

(0) edge node {} (1) 
(0) edge node {} (3) 
(1) edge node {} (4) 
(1) edge node {} (5) 

(3) edge node {} (7) 
(3) edge node {} (8) 
(4) edge node {} (6) 
(4) edge node {} (8) 
(5) edge node {} (7) 
(5) edge node {} (9) 
(6) edge node {} (7) 
(8) edge node {} (9) 
;

\node[main_node] (50) at (20.857142857142858, 4.857142857142858) {};
\node[main_node] (51) at (24.857142857142858, 1.9509727452415901) {};
\node[main_node] (53) at (20.857142857142858, 2.7542184086770387) {};
\node[main_node] (54) at (23.329278812161498, -2.751309273284966) {};
\node[main_node] (55) at (22.857142857142854, 1.3011333527264064) {};
\node[main_node] (56) at (18.385006902124218, -2.751309273284966) {};
\node[main_node] (57) at (19.62107487963354, -1.050007656536872) {};
\node[main_node] (58) at (22.093210834652176, -1.050007656536872) {};
\node[main_node] (59) at (18.857142857142858, 1.3011333527264064) {};

 \path[draw, thick]
(56) edge[dashed] node {} (50) 
(59) edge[dotted] node {} (39)

(50) edge node {} (51) 
(50) edge node {} (53) 
(51) edge node {} (54) 
(51) edge node {} (55) 

(53) edge node {} (57) 
(53) edge node {} (58) 
(54) edge node {} (56) 
(54) edge node {} (58) 
(55) edge node {} (57) 
(55) edge node {} (59) 
(56) edge node {} (57) 
(58) edge node {} (59) 
;

\end{tikzpicture}
\caption{Illustration of the construction in~\cref{prop:upp_bnd} for $(k,g)=(3,4)$ and $r=3$.}\label{fig:Prop3Illus}
\end{figure}

Next, it is natural to wonder whether for all integers $k,g \geq 3$, we have $f(k,g)=\frac{M(k,g)}{g}$ or $f(k,g)=\frac{n(k,g)}{g}$. We will show that this is not the case.
\begin{proposition}\label{prop:main_not_sharp}
    Neither of the two bounds $\frac{M(k,g)}{g} \leq f(k,g)$ and $f(k,g) \leq \frac{n(k,g)}{g}$ are sharp for all pairs $(k,g)$.
\end{proposition}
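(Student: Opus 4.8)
The plan is to prove the two non-sharpness assertions independently: it suffices to exhibit one pair $(k,g)$ for which $f(k,g) > \frac{M(k,g)}{g}$ (so the lower bound coming from \cref{prop:imprLowerBound} fails to be sharp) and one pair for which $f(k,g) < \frac{n(k,g)}{g}$ (so the upper bound of \cref{prop:upp_bnd} fails to be sharp). A single pair with $\frac{M(k,g)}{g} < f(k,g) < \frac{n(k,g)}{g}$ would of course settle both at once, but since I do not expect to pin $f(k,g)$ down exactly, I would aim for two separate witnesses. In either direction I would restrict to a pair admitting no Moore graph, i.e.\ $M(k,g) < n(k,g)$, since otherwise the two bounds coincide and no separation is possible.

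For the upper bound I would beat the cage-chain of \cref{prop:upp_bnd}. The key observation is that a $(k,g)$-cage is optimised to be a small \emph{closed} graph and is therefore ``round'' (its diameter is only about $g/2$), so each copy contributes $n(k,g)$ vertices while advancing the distance by merely $\approx g$, which is exactly what produces the rate $\frac{n(k,g)}{g}$. What one actually needs is only a \emph{linkable fragment}: a graph that is $k$-regular of girth $g$ except at $O_{k,g}(1)$ vertices, equipped with two ``ports'' at mutual distance $L$, whose ratio $\frac{|V|}{L}$ is strictly smaller than $\frac{n(k,g)}{g}$. Chaining $r$ copies of such a fragment and capping the two ends exactly as in \cref{prop:upp_bnd} then yields $(k;g,d)$-graphs whose order-to-diameter ratio tends to $\frac{|V|}{L} < \frac{n(k,g)}{g}$, establishing $f(k,g) < \frac{n(k,g)}{g}$ for that pair. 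Concretely, I would search (by hand, or with the generation algorithm of \cref{sec:gen_algo}) for one explicit long-and-thin girth-$g$ fragment, verify its girth, regularity and port distance, and then describe its periodic repetition.

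For the lower bound the plan is to show that the rate $\frac{M(k,g)}{g}$ is \emph{unattainable} for a suitable non-Moore pair. Summing the window estimate $\sum_{i=q+1}^{q+g}|N_i|\geq M(k,g)$ from \cref{prop:imprLowerBound} over all length-$g$ windows along a diameter-realising geodesic shows that $n\kgd = \frac{M(k,g)}{g}d + O_{k,g}(1)$ would force all but $O_{k,g}(1)$ of these windows to carry \emph{exactly} $M(k,g)$ vertices, i.e.\ to be perfect Moore trees (here integrality is used: a window either meets the bound or exceeds it by at least one). Two exact Moore-tree windows centred at consecutive geodesic vertices overlap in $g-1$ layers and must agree there, and iterating along a long run of such windows should propagate into a rigid, distance-regular-type bi-infinite structure; for a pair with no Moore graph this rigidity is contradictory, so in fact $n\kgd \geq \bigl(\frac{M(k,g)}{g}+\eps\bigr)d$ for all large $d$ and hence $f(k,g) > \frac{M(k,g)}{g}$. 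A more hands-on route, which I would pursue for a concrete even-girth pair, is to upgrade the bipartite independent-set bound of \cref{prop:imprLowerBound_specialimprovement} from the two endpoints to a positive fraction of interior windows, directly producing a lower bound on $n\kgd$ of slope exceeding $\frac{M(k,g)}{g}$.

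The main obstacle is unmistakably the lower-bound witness. A single Moore-tree block already meets $\frac{M(k,g)}{g}$ with equality, and no sliding-window count improves on this (for instance, two overlapping Moore trees around an edge need \emph{not} form a larger tree, since a girth-$g$ cycle may sit inside their union), so any gain must come from a genuinely \emph{global} obstruction to a bi-infinite ``Moore-rate tube''. Turning the heuristic rigidity argument into a rigorous, quantitative impossibility for a concrete pair -- rather than a purely asymptotic statement -- is the delicate step, and is where I expect to lean on the structural analysis underlying \cref{prop:imprLowerBound_specialimprovement} and, if necessary, on the computational data of \cref{sec:determined_bounds}.
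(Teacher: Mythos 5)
Your overall strategy coincides with the paper's: two separate witnesses, a ``linkable fragment'' chained as in \cref{prop:upp_bnd} for the upper bound, and a window-counting/rigidity argument for the lower bound. However, both halves stop exactly where the real mathematical content lies, so as it stands the proposition is not proved. For the upper bound, everything hinges on the existence of a concrete fragment with ratio strictly below $\frac{n(k,g)}{g}$; the paper exhibits one explicitly for $(k,g)=(4,7)$ --- a girth-$7$ graph on $76$ vertices, $4$-regular except for two degree-$2$ ports at distance $8$, which plugged into the chain of \cref{prop:upp_bnd} gives $f(4,7)\le \frac{75}{8}<\frac{67}{7}=\frac{n(4,7)}{7}$. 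You correctly describe how such a fragment would be used, but ``I would search for one'' does not discharge the existence claim, which is the entire point of this half.

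For the lower bound, the finishing step you sketch (``two exact Moore-tree windows \ldots{} must agree \ldots{} propagate into a rigid bi-infinite structure \ldots{} contradictory for a non-Moore pair'') is precisely the step you flag as delicate, and it would not go through as stated: equality in a window bound only says the window is covered by a Moore tree rooted near its centre, and overlapping such trees need not assemble into anything globally contradictory --- as you observe yourself, a girth-$g$ cycle can sit inside the union of two of them. The paper's actual argument for $(k,g)=(3,7)$ sidesteps this. From eight consecutive length-$7$ windows each summing to exactly $M(3,7)=22$ it deduces only the \emph{periodicity of layer sizes} $|N_{i+j}|=|N_{i+j+7}|$, applies pigeonhole to find a layer of size at most $3$, and then (i) for size $1$ identifies the two period-apart singleton layers to manufacture a $(3,7)$-graph on $22<n(3,7)=24$ vertices, a contradiction, and (ii) for sizes $2$ and $3$ rules out the substructure by an exhaustive computer search on the induced subgraph spanned by the eight layers. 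Neither the identification trick nor this finite case analysis appears in your proposal, and without them (or a rigorous substitute for the rigidity heuristic) the inequality $f(3,7)>\frac{22}{7}$ is not established.
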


\begin{proof}

    Let $G$ be a large enough (to be determined later) $(3;7,d)$-cage, let $u,v \in V(G)$ be such that $d(u,v)=d=\diam(G)$ and define $N_i := \{w \in V(G)~|~d(u,w)=i\}$. If $f(3,7)=\frac{M(3,7)}{7}=\frac{22}{7},$ then there exists an integer $c_1$ such that $|N_i|+|N_{i+1}|+\ldots+|N_{i+6}|=22$ for all $i \in \{0,1,\ldots,d-6\}$ except for at most $c_1$ choices of $i$. Therefore, we may assume that there exists some $i \in \{0,1,\ldots,d-13\}$ such that $|N_i|+|N_{i+1}|+\ldots+|N_{i+6}|=22$, $|N_{i+1}|+|N_{i+2}|+\ldots+|N_{i+7}|=22$, $\ldots$, $|N_{i+7}|+|N_{i+7+1}|+\ldots+|N_{i+7+6}|=22$ (by choosing $G$ large enough in terms of $c_1$). Since the sums are all equal to $22$, we have for every $j \in \{0,\ldots,6\}$ that $|N_{i+j}|=|N_{i+j+7}|$, i.e.\ the neighbourhood sizes are repetitive. By the pigeonhole principle, there exists some $j \in \{0,\ldots,6\}$ such that $|N_{i+j}| \leq 3$. If $|N_{i+j}|=1$, then $|N_{i+j+7}|=1$. Let $G'$ be the graph induced by $N_{i+j} \cup \ldots \cup N_{i+j+7}$. By identifying $u \in N_{i+j}$ with $v \in N_{i+j+7}$, we would obtain a $(3,7)$-graph of order $22$, which is impossible since $n(3,7)=24>22$.
    If $|N_{i+j}|=2$ or $|N_{i+j}|=3$, then we checked that this substructure is again impossible, by computer verification (more precisely, by exhaustively generating graphs) based on this condition and the $7$ next neighbourhoods. Details of the computer verification can be found in Appendix~\ref{app:comp_ver}.
    So we conclude that $f(3,7)>\frac{22}{7}$.

    For $(k,g)=(4,7),$ there is a graph $G$ with girth $7$, order $76$, with all vertices having degree $4$ except for $2$ which have degree $2$ and are distance $8$ apart (see~\cref{fig:almost4RegGirth7}, it is derived from the data set from~\citep{EJJZ25}).
     This graph is made publicly available on the House of Graphs~\citep{HOG}, as the graph with HoG-id $54041$\footnote{It can be downloaded from \url{https://houseofgraphs.org/graphs/54041}}.
     By letting $M'=G$ in the proof of \cref{prop:upp_bnd} and using the construction described therein, we obtain $f(4,7) \leq \frac{75}{8}<\frac{n(4,7)}{7}=\frac{67}{7}$.
\begin{figure}[h]
    \centering

    \begin{tikzpicture}[scale=0.45]
\draw[fill] (18.00,4.00) circle (0.15);
\draw[fill] (12.00,0.61) circle (0.15);
\draw[fill] (6.00,2.00) circle (0.15);
\draw[fill] (12.00,1.22) circle (0.15);
\draw[fill] (6.00,8.00) circle (0.15);
\draw[fill] (12.00,10.96) circle (0.15);
\draw[fill] (12.00,7.91) circle (0.15);
\draw[fill] (12.00,12.17) circle (0.15);
\draw[fill] (18.00,2.00) circle (0.15);
\draw[fill] (12.00,5.48) circle (0.15);
\draw[fill] (6.00,10.00) circle (0.15);
\draw[fill] (12.00,12.78) circle (0.15);
\draw[fill] (18.00,6.00) circle (0.15);
\draw[fill] (12.00,6.09) circle (0.15);
\draw[fill] (12.00,1.83) circle (0.15);
\draw[fill] (12.00,3.65) circle (0.15);
\draw[fill] (6.00,4.00) circle (0.15);
\draw[fill] (12.00,9.74) circle (0.15);
\draw[fill] (12.00,6.70) circle (0.15);
\draw[fill] (18.00,12.00) circle (0.15);
\draw[fill] (12.00,10.35) circle (0.15);
\draw[fill] (6.00,12.00) circle (0.15);
\draw[fill] (12.00,8.52) circle (0.15);
\draw[fill] (6.00,6.00) circle (0.15);
\draw[fill] (12.00,2.43) circle (0.15);
\draw[fill] (12.00,4.26) circle (0.15);
\draw[fill] (18.00,10.00) circle (0.15);
\draw[fill] (12.00,11.57) circle (0.15);
\draw[fill] (12.00,9.13) circle (0.15);
\draw[fill] (12.00,7.30) circle (0.15);
\draw[fill] (12.00,13.39) circle (0.15);
\draw[fill] (18.00,8.00) circle (0.15);
\draw[fill] (12.00,4.87) circle (0.15);
\draw[fill] (12.00,3.04) circle (0.15);
\draw[fill] (9.00,0.74) circle (0.15);
\draw[fill] (9.00,9.58) circle (0.15);
\draw[fill] (15.00,12.53) circle (0.15);
\draw[fill] (15.00,13.26) circle (0.15);
\draw[fill] (9.00,7.37) circle (0.15);
\draw[fill] (15.00,3.68) circle (0.15);
\draw[fill] (9.00,5.16) circle (0.15);
\draw[fill] (15.00,11.05) circle (0.15);
\draw[fill] (15.00,0.74) circle (0.15);
\draw[fill] (9.00,2.95) circle (0.15);
\draw[fill] (9.00,11.79) circle (0.15);
\draw[fill] (15.00,6.63) circle (0.15);
\draw[fill] (15.00,1.47) circle (0.15);
\draw[fill] (15.00,8.11) circle (0.15);
\draw[fill] (15.00,5.16) circle (0.15);
\draw[fill] (21.00,9.33) circle (0.15);
\draw[fill] (9.00,1.47) circle (0.15);
\draw[fill] (9.00,2.21) circle (0.15);
\draw[fill] (3.00,4.67) circle (0.15);
\draw[fill] (15.00,2.21) circle (0.15);
\draw[fill] (15.00,2.95) circle (0.15);
\draw[fill] (9.00,12.53) circle (0.15);
\draw[fill] (3.00,9.33) circle (0.15);
\draw[fill] (9.00,8.11) circle (0.15);
\draw[fill] (9.00,8.84) circle (0.15);
\draw[fill] (9.00,10.32) circle (0.15);
\draw[fill] (9.00,5.89) circle (0.15);
\draw[fill] (15.00,4.42) circle (0.15);
\draw[fill] (9.00,11.05) circle (0.15);
\draw[fill] (15.00,9.58) circle (0.15);
\draw[fill] (9.00,3.68) circle (0.15);
\draw[fill] (15.00,10.32) circle (0.15);
\draw[fill] (15.00,7.37) circle (0.15);
\draw[fill] (9.00,4.42) circle (0.15);
\draw[fill] (9.00,6.63) circle (0.15);
\draw[fill] (15.00,11.79) circle (0.15);
\draw[fill] (9.00,13.26) circle (0.15);
\draw[fill] (15.00,5.89) circle (0.15);
\draw[fill] (21.00,4.67) circle (0.15);
\draw[fill] (15.00,8.84) circle (0.15);
\draw[fill] (0.00,7.00) circle (0.15);
\draw[fill] (24.00,7.00) circle (0.15);
\draw (18.00,4.00)--(15.00,1.47);
\draw (18.00,4.00)--(15.00,8.11);
\draw (18.00,4.00)--(15.00,5.16);
\draw (18.00,4.00)--(21.00,9.33);
\draw (12.00,0.61)--(9.00,0.74);
\draw (12.00,0.61)--(9.00,7.37);
\draw (12.00,0.61)--(15.00,0.74);
\draw (12.00,0.61)--(15.00,1.47);
\draw (6.00,2.00)--(9.00,0.74);
\draw (6.00,2.00)--(9.00,1.47);
\draw (6.00,2.00)--(9.00,2.21);
\draw (6.00,2.00)--(3.00,4.67);
\draw (12.00,1.22)--(9.00,0.74);
\draw (12.00,1.22)--(15.00,2.21);
\draw (12.00,1.22)--(15.00,2.95);
\draw (12.00,1.22)--(9.00,12.53);
\draw (6.00,8.00)--(9.00,7.37);
\draw (6.00,8.00)--(3.00,9.33);
\draw (6.00,8.00)--(9.00,8.11);
\draw (6.00,8.00)--(9.00,8.84);
\draw (12.00,10.96)--(15.00,0.74);
\draw (12.00,10.96)--(9.00,10.32);
\draw (12.00,10.96)--(9.00,5.89);
\draw (12.00,10.96)--(15.00,4.42);
\draw (12.00,7.91)--(9.00,7.37);
\draw (12.00,7.91)--(15.00,10.32);
\draw (12.00,7.91)--(15.00,7.37);
\draw (12.00,7.91)--(9.00,4.42);
\draw (12.00,12.17)--(15.00,1.47);
\draw (12.00,12.17)--(9.00,6.63);
\draw (12.00,12.17)--(15.00,11.79);
\draw (12.00,12.17)--(9.00,13.26);
\draw (18.00,2.00)--(15.00,0.74);
\draw (18.00,2.00)--(15.00,5.89);
\draw (18.00,2.00)--(21.00,4.67);
\draw (18.00,2.00)--(15.00,8.84);
\draw (12.00,5.48)--(9.00,9.58);
\draw (12.00,5.48)--(15.00,3.68);
\draw (12.00,5.48)--(9.00,2.95);
\draw (12.00,5.48)--(15.00,8.11);
\draw (6.00,10.00)--(9.00,9.58);
\draw (6.00,10.00)--(3.00,9.33);
\draw (6.00,10.00)--(9.00,10.32);
\draw (6.00,10.00)--(9.00,11.05);
\draw (12.00,12.78)--(9.00,9.58);
\draw (12.00,12.78)--(15.00,10.32);
\draw (12.00,12.78)--(9.00,6.63);
\draw (12.00,12.78)--(15.00,5.89);
\draw (18.00,6.00)--(15.00,3.68);
\draw (18.00,6.00)--(15.00,2.21);
\draw (18.00,6.00)--(15.00,9.58);
\draw (18.00,6.00)--(21.00,4.67);
\draw (12.00,6.09)--(9.00,2.95);
\draw (12.00,6.09)--(15.00,2.95);
\draw (12.00,6.09)--(9.00,8.11);
\draw (12.00,6.09)--(15.00,11.79);
\draw (12.00,1.83)--(15.00,3.68);
\draw (12.00,1.83)--(9.00,1.47);
\draw (12.00,1.83)--(15.00,4.42);
\draw (12.00,1.83)--(9.00,13.26);
\draw (12.00,3.65)--(15.00,8.11);
\draw (12.00,3.65)--(9.00,2.21);
\draw (12.00,3.65)--(9.00,8.84);
\draw (12.00,3.65)--(15.00,8.84);
\draw (6.00,4.00)--(9.00,2.95);
\draw (6.00,4.00)--(3.00,4.67);
\draw (6.00,4.00)--(9.00,3.68);
\draw (6.00,4.00)--(9.00,4.42);
\draw (12.00,9.74)--(15.00,12.53);
\draw (12.00,9.74)--(9.00,5.16);
\draw (12.00,9.74)--(9.00,11.79);
\draw (12.00,9.74)--(15.00,5.16);
\draw (12.00,6.70)--(15.00,12.53);
\draw (12.00,6.70)--(9.00,8.84);
\draw (12.00,6.70)--(15.00,4.42);
\draw (12.00,6.70)--(9.00,3.68);
\draw (18.00,12.00)--(15.00,12.53);
\draw (18.00,12.00)--(15.00,7.37);
\draw (18.00,12.00)--(15.00,11.79);
\draw (18.00,12.00)--(21.00,4.67);
\draw (12.00,10.35)--(9.00,5.16);
\draw (12.00,10.35)--(15.00,2.95);
\draw (12.00,10.35)--(9.00,11.05);
\draw (12.00,10.35)--(15.00,8.84);
\draw (6.00,12.00)--(9.00,11.79);
\draw (6.00,12.00)--(9.00,12.53);
\draw (6.00,12.00)--(3.00,9.33);
\draw (6.00,12.00)--(9.00,13.26);
\draw (12.00,8.52)--(15.00,5.16);
\draw (12.00,8.52)--(15.00,2.21);
\draw (12.00,8.52)--(9.00,10.32);
\draw (12.00,8.52)--(9.00,4.42);
\draw (6.00,6.00)--(9.00,5.16);
\draw (6.00,6.00)--(3.00,4.67);
\draw (6.00,6.00)--(9.00,5.89);
\draw (6.00,6.00)--(9.00,6.63);
\draw (12.00,2.43)--(15.00,5.16);
\draw (12.00,2.43)--(9.00,1.47);
\draw (12.00,2.43)--(9.00,8.11);
\draw (12.00,2.43)--(15.00,5.89);
\draw (12.00,4.26)--(9.00,11.79);
\draw (12.00,4.26)--(9.00,2.21);
\draw (12.00,4.26)--(15.00,9.58);
\draw (12.00,4.26)--(15.00,10.32);
\draw (18.00,10.00)--(15.00,13.26);
\draw (18.00,10.00)--(15.00,11.05);
\draw (18.00,10.00)--(15.00,6.63);
\draw (18.00,10.00)--(21.00,9.33);
\draw (12.00,11.57)--(15.00,13.26);
\draw (12.00,11.57)--(9.00,8.11);
\draw (12.00,11.57)--(9.00,5.89);
\draw (12.00,11.57)--(15.00,9.58);
\draw (12.00,9.13)--(15.00,13.26);
\draw (12.00,9.13)--(9.00,4.42);
\draw (12.00,9.13)--(9.00,13.26);
\draw (12.00,9.13)--(15.00,8.84);
\draw (12.00,7.30)--(15.00,11.05);
\draw (12.00,7.30)--(9.00,12.53);
\draw (12.00,7.30)--(9.00,3.68);
\draw (12.00,7.30)--(15.00,5.89);
\draw (12.00,13.39)--(15.00,6.63);
\draw (12.00,13.39)--(15.00,2.21);
\draw (12.00,13.39)--(9.00,8.84);
\draw (12.00,13.39)--(9.00,6.63);
\draw (18.00,8.00)--(21.00,9.33);
\draw (18.00,8.00)--(15.00,2.95);
\draw (18.00,8.00)--(15.00,4.42);
\draw (18.00,8.00)--(15.00,10.32);
\draw (12.00,4.87)--(15.00,11.05);
\draw (12.00,4.87)--(9.00,2.21);
\draw (12.00,4.87)--(9.00,10.32);
\draw (12.00,4.87)--(15.00,11.79);
\draw (12.00,3.04)--(15.00,6.63);
\draw (12.00,3.04)--(9.00,1.47);
\draw (12.00,3.04)--(9.00,11.05);
\draw (12.00,3.04)--(15.00,7.37);
\draw[thick] (9.00,0.74) .. controls (9.90,5.16) and (9.90,5.16) .. (9.00,9.58);
\draw[thick] (15.00,12.53) .. controls (15.90,12.89) and (15.90,12.89) .. (15.00,13.26);
\draw[thick] (9.00,7.37) .. controls (9.90,6.26) and (9.90,6.26) .. (9.00,5.16);
\draw[thick] (15.00,3.68) .. controls (15.90,7.37) and (15.90,7.37) .. (15.00,11.05);
\draw[thick] (15.00,0.74) .. controls (15.90,3.68) and (15.90,3.68) .. (15.00,6.63);
\draw[thick] (9.00,2.95) .. controls (9.90,7.37) and (9.90,7.37) .. (9.00,11.79);
\draw[thick] (15.00,1.47) .. controls (15.90,5.53) and (15.90,5.53) .. (15.00,9.58);
\draw[thick] (15.00,8.11) .. controls (15.90,7.74) and (15.90,7.74) .. (15.00,7.37);
\draw (21.00,9.33)--(24.00,7.00);
\draw (3.00,4.67)--(0.00,7.00);
\draw[thick] (9.00,12.53) .. controls (9.90,9.21) and (9.90,9.21) .. (9.00,5.89);
\draw (3.00,9.33)--(0.00,7.00);
\draw[thick] (9.00,11.05) .. controls (9.90,7.37) and (9.90,7.37) .. (9.00,3.68);
\draw (21.00,4.67)--(24.00,7.00);

\end{tikzpicture}  
\caption{The example showing that $f(4,7)<\frac{n(4,7)}{7}=\frac{67}{7}.$}\label{fig:omega3delta4}
\label{fig:almost4RegGirth7}
\end{figure}
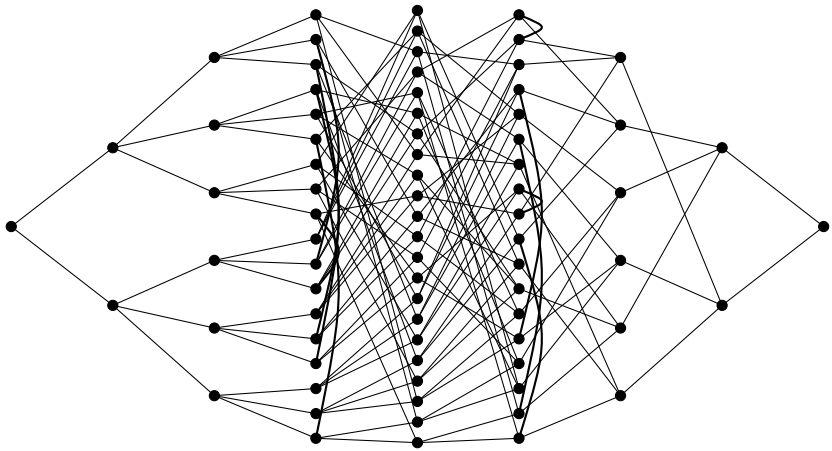
\end{proof}

Together, \cref{prop:imprLowerBound,prop:imprLowerBound_specialimprovement,prop:upp_bnd,prop:main_not_sharp} yield \cref{thm:general_estimate}.

\section{Proof of Theorem~\ref{thm:finiteTask}}\label{sec:thmFiniteTask}

We now define the notion of a \textit{repeatable graph} (this notion is based on a similar notion as discussed in~\citep{CJ25}, but slightly adjusted to fit the precise context of the current paper). We will define this notion in such a way that every repeatable graph (with respect to $k$ and $g$) gives rise to an upper bound on $f(k,g)$.

\begin{definition}
Let $G$ be a graph and let $N_0 \subset V(G)$ be a subset of vertices. For all $i \geq 1$, define $N_i = N_i(N_0) := \{v \in V(G)~|~\min_{u \in N_0}(d(u,v))=i\}$. Let $d$ be the largest integer for which $N_d$ is non-empty. For integers $k$ and $g$, we call a graph $G$ induced by consecutive neighbourhoods $N_0, N_1, \ldots,N_d$ a repeatable graph (with respect to $k$ and $g$) if 
\begin{enumerate}[noitemsep]
	\item $G$ has girth at least $g$, 
	\item every vertex in $N_1 \cup \ldots \cup N_{d-1}$ has degree $k$, 
	\item the sum of the degrees of vertices in $N_0$ is even whenever $k$ is even,
	\item the sum of the degrees of vertices in $N_d$ is even whenever $k$ is even, 
	\item $d+1 \geq 2g$ and
	\item  there exists an isomorphism between the graph induced by $N_0 \cup \ldots \cup N_{g-1}$ and the graph induced by $N_{d-g+1} \cup \ldots \cup N_d$ such that for every $\ell \in \{0,1,\ldots,g-1\}$ the isomorphism maps every vertex in $N_\ell$ to a vertex in $N_{\ell+d-g+1}$. 
\end{enumerate}

\end{definition}

The graph in Fig.~\ref{fig:repeatableK3G4} shows an example of a graph that is repeatable with respect to $k=3$ and $g=4$.

\begin{figure}[h]
    \centering

    \begin{tikzpicture}[scale=0.6]

    \foreach \x in {0}{
        \foreach \y in {-1,1}{
        \draw[fill] (\x,\y) circle (0.25);
        }
    }
    \foreach \x in {2}{
        \foreach \y in {0}{
        \draw[fill] (\x,\y) circle (0.15);
        }
    }
    \foreach \x in {4}{
        \foreach \y in {0}{
        \draw[fill] (\x,\y) circle (0.15);
        }
    }
    \foreach \x in {6}{
        \foreach \y in {-1,1}{
        \draw[fill] (\x,\y) circle (0.15);
        }
    }
    \foreach \x in {8}{
        \foreach \y in {-1,1}{
        \draw[fill] (\x,\y) circle (0.15);
        }
    }
    \foreach \x in {10}{
        \foreach \y in {0}{
        \draw[fill] (\x,\y) circle (0.15);
        }
    }
    \foreach \x in {12}{
        \foreach \y in {0}{
        \draw[fill] (\x,\y) circle (0.15);
        }
    }
    \foreach \x in {14}{
        \foreach \y in {-1,1}{
        \draw[fill] (\x,\y) circle (0.15);
        }
    }

   \draw (2,0)--(0,-1);
   \draw (2,0)--(0,1);
   \draw (4,0)--(2,0);
   \draw (6,-1)--(4,0);
   \draw (6,1)--(4,0);
   \draw (8,-1)--(6,-1);
   \draw (8,-1)--(6,1);
   \draw (8,1)--(6,1);
   \draw (8,1)--(6,-1);
   \draw (10,0)--(8,-1);
   \draw (10,0)--(8,1);
   \draw (12,0)--(10,0);
   \draw (14,-1)--(12,0);
   \draw (14,1)--(12,0);
    \end{tikzpicture} 
\caption{A graph that is repeatable with respect to $k=3$ and $g=4$ (where vertices in $N_0$ are shown larger).}\label{fig:repeatableK3G4}
\end{figure}
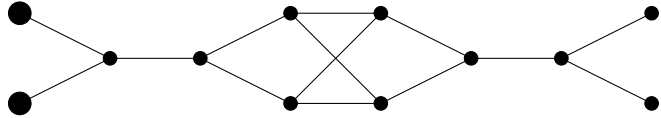

We now show that repeatable graphs indeed give rise to an upper bound on $f(k,g)$.
Here, we will make use of the following two near-trivial lemmas, that can be used to extend partial constructions to desired ones.

\begin{lemma}
\label{lem:extensionLemma1}
    Given any sequence of natural numbers such that the sum is even, there is a (not necessarily  connected) multigraph which has these numbers as its degree sequence.    
\end{lemma}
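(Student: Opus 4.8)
The plan is to prove the statement constructively by building a multigraph directly from the given degree sequence, rather than invoking any existence theorem. I would proceed by induction on the number of vertices whose degree is required to be positive, or more cleanly, by a direct greedy pairing argument.

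First I would set up notation: let $d_1, d_2, \ldots, d_n$ be the given sequence of natural numbers with $\sum_{i=1}^n d_i$ even. I would introduce $n$ vertices $v_1, \ldots, v_n$ and associate to each vertex $v_i$ a collection of $d_i$ \emph{half-edges} (or stubs). The total number of half-edges is $\sum_{i=1}^n d_i$, which is even by hypothesis. The key observation is that an arbitrary pairing of these half-edges into pairs yields a multigraph (allowing loops and parallel edges) in which vertex $v_i$ has degree exactly $d_i$: each half-edge at $v_i$ contributes exactly $1$ to its degree, and pairing two half-edges at the same vertex creates a loop contributing $2$, which is consistent with the standard convention that a loop adds $2$ to the degree.

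The main step is simply to observe that such a pairing always exists: since the total number of half-edges is even, we can list all half-edges in any order and pair the $(2j-1)$-th with the $(2j)$-th for each $j$. This produces the required multigraph, and no connectivity is claimed, so I need not worry about isolating components. I would remark that the multigraph may contain loops and multiple edges, which is exactly why the statement says \emph{multigraph} rather than simple graph.

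The only subtlety — and the step I would flag as needing a word of care — is the degree convention for loops when some $d_i$ is odd while the overall sum is even. Because only the \emph{total} sum is assumed even, individual degrees may be odd, so loops alone cannot always absorb a vertex's degree; here the pairing argument handles everything uniformly by matching a leftover half-edge at $v_i$ with a half-edge at some other vertex $v_j$, producing an ordinary edge. Since the proof reduces to the trivial fact that an even-sized set admits a perfect matching (into unordered pairs), the lemma is indeed near-trivial, and I would keep the write-up to just these few lines.
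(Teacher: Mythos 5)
Your proof is correct and takes essentially the same approach as the paper: both are direct constructions that exploit the even degree sum and the convention that a loop contributes $2$, the paper by iteratively drawing edges (or loops) between still-deficient vertices and you by pairing half-edges, which is just a cleaner formalization of the same idea.
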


\begin{proof}
    This is trivial, since once can draw iteratively edges between any two vertices which did not attain the desired degree yet (even if it is the vertex itself and so a loop is created).
\end{proof}

\begin{lemma}\label{lem:extensionLemma2}
    For every $k,g \ge 3$, any $k$-regular multigraph $G$ can be modified into a $(k,g)$-graph $G'$, by replacing some of its edges with a fixed graph.
\end{lemma}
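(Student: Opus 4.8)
The plan is to build a single \emph{edge-gadget} $H$ that depends only on $k$ and $g$, and then replace every edge of the multigraph $G$ by a fresh copy of $H$. By Sachs' theorem~\cite{sachs1963regular} there is a $k$-regular graph $F$ of girth $g$; fix such an $F$ and single out an edge $ab$ lying on a shortest cycle $C_0$ of $F$. Set $H := F - ab$. In $H$ the two \emph{terminals} $a,b$ have degree $k-1$ while every other vertex still has degree $k$; moreover the $g-1$ remaining edges of $C_0$ form an $a$--$b$ path, so $a,b$ stay connected, and the usual girth argument (any $a$--$b$ path in $H$ closes up with the edge $ab$ into a cycle of $F$) shows $d_H(a,b) = g-1$.

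First I would make the replacement precise. Each vertex $u$ of $G$ carries $k$ edge-ends, a loop at $u$ contributing two of them. For each edge $e=uw$ of $G$ I delete $e$, introduce a disjoint copy $H_e$ of $H$ with terminals $a_e,b_e$, and add the two edges $u a_e$ and $w b_e$; for a loop at $u$ I instead join both $a_e$ and $b_e$ to $u$. Then every original vertex loses exactly one edge-end and gains exactly one edge to a terminal, so it keeps degree $k$; each terminal rises from $k-1$ to $k$; and all remaining gadget vertices keep degree $k$. Hence the resulting graph $G'$ is $k$-regular. Since each terminal is fresh and receives a single external edge, and each $H_e$ is itself simple, no loops or parallel edges are introduced, so $G'$ is simple. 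These bookkeeping checks are routine.

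The only delicate point, and the step I expect to be the main obstacle, is controlling the girth. Every edge of $G$ has been removed, so any cycle of $G'$ must meet at least one gadget; because each terminal has a unique external edge, a cycle cannot both enter and leave a copy $H_e$ through the same terminal, and must therefore traverse an $a_e$--$b_e$ path of length $\ge g-1$ together with its two incident external edges. Thus any cycle through a gadget has length $\ge g+1$, which simultaneously covers the loop case (the cycle $u\,a_e\cdots b_e\,u$ has length $\ge g+1$) and the multi-edge case (two gadgets joining the same pair give a cycle of length $\ge 2(g+1)$). Consequently the girth of $G'$ is at least $g$.

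To obtain girth \emph{exactly} $g$ I would refine the choice of $F$ and $ab$. Taking $F$ to be a $(k,g)$-cage (which is $2$-connected, so that $F-ab$ remains connected for every edge $ab$) and choosing $ab$ to lie off some shortest cycle $C$ of $F$ — possible because a cage has more than $g$ edges — guarantees that $C$ survives intact inside the copy $H_e$. This surviving $g$-cycle, together with the lower bound above, pins the girth of $G'$ at exactly $g$, so $G'$ is the desired $(k,g)$-graph obtained from $G$ by replacing edges with the fixed gadget $H$.
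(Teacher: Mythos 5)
Your proof is correct and follows essentially the same route as the paper: the gadget is a $(k,g)$-graph (e.g.\ a cage) with one edge removed, spliced in at its two degree-$(k-1)$ vertices, with a surviving $g$-cycle in the gadget pinning the girth at exactly $g$. The only difference is that you replace every edge of $G$, whereas the paper replaces only (iteratively) edges lying on cycles shorter than $g$; both variants work.
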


\begin{proof}
    Let $H$ be any $\kg$-graph (one can take a $\kg$-cage), and $H^-$ be $H$ with one edge removed.

    Every edge $ab\in E(G)$ (or iteratively some of its edges which belong to cycles of length smaller than $g$) can be removed, after which $a$ and $b$ are connected to the degree $k-1$ vertices of a copy of $H^-$.

    By construction, the resulting graph is still $k$-regular, and has girth at least $g$, since $H$ has this property (note that every cycle in $G'$ using initial edges has the property by definition).  

    By choosing $H^-$ such that it has a cycle of length exactly $g$ (and executing one replacement if the girth of $G$ is larger than $g$), we can ensure that $G$ has girth exactly $g.$
\end{proof}

\begin{proposition}
\label{prop:repeatableGraphYieldsUpperBound}
For integers $k$ and $g$, let $G$ be induced by consecutive neighbourhoods $N_0, N_1, \ldots, N_d$ be a repeatable graph with respect to $k$ and $g$. Then $f(k,g) \leq \frac{|N_0 \cup \ldots \cup N_{d-g}|}{d+1-g}$.
\end{proposition}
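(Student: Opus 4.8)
The plan is to treat the repeatable graph $G$ as a single ``brick'' and to concatenate $r$ copies of it along the matching boundary blocks furnished by condition~(6), ultimately producing $(k;g,D)$-graphs of unbounded diameter whose order-to-diameter ratio tends to the claimed value. Concretely, take copies $G^{(1)},\dots,G^{(r)}$ and, for each $1\le i<r$, identify the last $g$ layers $N_{d-g+1}\cup\dots\cup N_d$ of $G^{(i)}$ with the first $g$ layers $N_0\cup\dots\cup N_{g-1}$ of $G^{(i+1)}$ via the isomorphism of condition~(6), which respects layer indices (it sends $N_\ell$ to $N_{\ell+d-g+1}$). Condition~(5), i.e.\ $d+1\ge 2g$, ensures that inside one copy these two blocks are disjoint, so the identification is well defined. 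I would then equip the resulting graph $G_r$ with the grading in which copy $i$ occupies the global layers $(i-1)(d+1-g),\dots,(i-1)(d+1-g)+d$; by construction the overlap of consecutive copies is exactly one block of $g$ global layers, and every edge joins vertices whose global layers differ by at most one.

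Next I would establish the two invariants that make $G_r$ almost a $(k;g,D)$-graph. For regularity, a vertex lying strictly inside a single copy (global layers $g,\dots,d-g$) retains degree $k$ by condition~(2); for a seam vertex $w$, occupying layer $d-j$ in $G^{(i)}$ and layer $g-1-j$ in $G^{(i+1)}$ for some $0\le j\le g-1$, the edges of $w$ inside the shared block are identified and hence counted once, and a short case distinction (the extreme cases $j\in\{0,g-1\}$ versus the middle range $1\le j\le g-2$) shows that exactly $k$ distinct edges survive, so $\deg_{G_r}(w)=k$ as well. Thus every vertex of $G_r$ has degree $k$ except those of the first copy's $N_0$ and the last copy's $N_d$. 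For the girth, since the grading changes by at most one along each edge, any cycle of length below $g$ is confined to a window of fewer than $g$ consecutive global layers, and every such window lies inside a single copy; by condition~(1) no short cycle exists there, so $G_r$ has girth at least $g$. Finally, condition~(6) forces $\lvert N_{d-g+1}\cup\dots\cup N_d\rvert=\lvert N_0\cup\dots\cup N_{g-1}\rvert$, so each appended copy contributes exactly $\lvert N_g\cup\dots\cup N_d\rvert=\lvert N_0\cup\dots\cup N_{d-g}\rvert$ new vertices and $d+1-g$ new layers; hence $\lvert V(G_r)\rvert=r\,\lvert N_0\cup\dots\cup N_{d-g}\rvert+O_{k,g}(1)$ and, since distances dominate grading differences, $\diam(G_r)\ge r(d+1-g)-O_{k,g}(1)$.

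It remains to repair the two irregular ends and fix the girth to be exactly $g$, which is where \cref{lem:extensionLemma1,lem:extensionLemma2} enter. At each end the deficiency sequence $(k-\deg(w))_w$ has even sum: for even $k$ this is precisely conditions~(3) and~(4) (using that $k\lvert N_0\rvert$ is even), while for odd $k$ a possibly odd deficiency is absorbed by appending a single auxiliary degree-$k$ vertex, which shifts the total by $k$ and restores parity. \cref{lem:extensionLemma1} then realizes these deficiencies by a $k$-regular multigraph attached locally at each end; because the two ends are far apart in the grading, no shortcut between distant layers is created, so the diameter lower bound is preserved. Applying \cref{lem:extensionLemma2} to this $k$-regular multigraph replaces the finitely many offending edges by copies of a fixed $(k,g)$-cage with one edge removed, yielding a genuine simple graph of girth exactly $g$. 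All these corrections cost only $O_{k,g}(1)$ vertices and alter the diameter by $O_{k,g}(1)$, so the final graph is a $(k;g,D)$-graph of order $r\,\lvert N_0\cup\dots\cup N_{d-g}\rvert+O_{k,g}(1)$ and diameter $D=r(d+1-g)+O_{k,g}(1)$. Letting $r\to\infty$ (and absorbing intermediate diameters by a single bounded-length spacer so that all large $D$ occur) gives $n(k;g,D)\le \frac{\lvert N_0\cup\dots\cup N_{d-g}\rvert}{d+1-g}\,D+O_{k,g}(1)$, which is exactly the desired bound on $f(k,g)$.

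The step I expect to be the crux is the degree accounting at the seams: one must ensure that two copies sharing a boundary block do not double-count the edges inside that block and that the seam vertices lying in the uncontrolled layers $N_0$ and $N_d$ of the individual copies nevertheless end up with degree exactly $k$ in $G_r$. This is exactly where conditions~(5) and~(6) are indispensable, and pinning down the identification together with the resulting degree equation---rather than the routine vertex count or the capping via \cref{lem:extensionLemma1,lem:extensionLemma2}---is the delicate part of the argument.
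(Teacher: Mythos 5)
Your proposal is correct and follows essentially the same route as the paper: glue copies of the repeatable graph along the isomorphic boundary blocks guaranteed by condition (6), observe that the order-to-diameter ratio of the concatenation tends to $\frac{|N_0 \cup \ldots \cup N_{d-g}|}{d+1-g}$, and repair the two irregular ends with \cref{lem:extensionLemma1,lem:extensionLemma2} at a cost of $O_{k,g}(1)$ vertices. The only cosmetic difference is that the paper identifies two copies at a time and notes the result is again repeatable (so the iteration is immediate), whereas you concatenate all $r$ copies at once and verify the seam degrees and the girth (via the layer grading) explicitly; both verifications are sound.
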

\begin{proof}
    Let $G'$ be the graph obtained by taking two disjoint copies $G_1$ and $G_2$ of $G$ and identifying the vertices in neighbourhood $N_{\ell+d-g+1}$ of $G_2$ with the vertices in neighbourhood $N_\ell$ of $G_1$ for every $\ell \in \{0,1,\ldots,g-1\}$ (according to the isomorphism that maps the vertices in these neighbourhoods to each other). 
    Now notice that by this construction $G'$ is again repeatable with respect to $k$ and $g$. By iteratively applying this construction, we obtain (repeatable) graphs for which the ratio between the order and the diameter goes to $\frac{|N_0 \cup \ldots \cup N_{d-g}|}{d+1-g}$. Moreover, we can complete any of these graphs to $(k,g)$-graphs by only introducing $O_{k,g}(1)$ extra vertices using the extension lemmas~\ref{lem:extensionLemma1},~\ref{lem:extensionLemma2}.

    We illustrate this for the extension at $N_0$.
    By~\cref{lem:extensionLemma1} applied to the degree sequence $\{ k-\deg(v) \mid v \in N_0\}$ (possibly with $k$ appended to this sequence if $k$ is odd to correct the parity), we can add edges between the vertices in $N_0$ (and one optional additional vertex if necessary) such that it becomes $k$-regular.
    Next, we can apply~\cref{lem:extensionLemma2} on the graph induced by the new edges to satisfy the girth condition.
    Finally, one can also apply this extension to $N_d$ to obtain a $(k,g)$-graph.
\end{proof}

We now prove~\cref{thm:finiteTask} based on the notion of repeatable graphs.

\begin{proof}[of~\cref{thm:finiteTask}]

We will show that there exists a graph induced by consecutive neighbourhoods $N_0, N_1, \ldots, N_d$ that is repeatable with respect to $k$ and $g$ for which $f(k,g) = \frac{|N_0 \cup \ldots \cup N_{d-g}|}{d+1-g}$ and that it suffices to only consider $O_{k,g}(1)$ different graphs to find such a graph. Hence, $f(k,g)$ can be computed in $O_{k,g}(1)$ time.

Consider a $(k;g,d)$-cage for $d$ sufficiently large. Since $f(k,g) \le \frac{n(k,g)}{g},$ at least $1/2-o(1)$ of all tuples of $g$ consecutive neighbourhoods have a sum of sizes bounded by $2n(k,g).$ 
In the latter case, there are $O_{k,g}(1)$ possibilities for the graph induced by these $g$ consecutive neighbourhoods. By the pigeonhole principle, this means that if we consider sufficiently many (but bounded number of) consecutive neighbourhoods, we can find some, $N_a, N_{a+1}, \ldots, N_b$, such that the graph induced by $N_a \cup N_{a+1} \cup \ldots \cup N_b$ must be a repeatable graph. So both the diameter of this graph as well as the neighbourhood sizes of this graph are all bounded by $O_{k,g}(1)$, which means there are only $O_{k,g}(1)$ such graphs in total. We will now argue that at least one of those $O_{k,g}(1)$ graphs achieves the equality.

By choosing $d$ arbitrarily large, we can obtain arbitrarily many repeatable subgraphs. We now claim that it is impossible that for all of these graphs the ratio $\frac{|N_a \cup \ldots \cup N_{b-g}|}{b-a+1-g}$ exceeds $f(k,g)$ (clearly the ratio cannot be smaller than $f(k,g)$ because of Proposition~\ref{prop:repeatableGraphYieldsUpperBound}).

We now define the notion of removing a repeatable graph: if $G[ \cup_{i=a}^b N_i]$ is repeatable, removing it implies we replace $G$ by the graph $G'$ which is formed by adding edges to the disjoint union of $G[\cup_{i=0}^{a+g-2} N_i]$ and $G[\cup_{i=b}^{d} N_i]$ between $N_{a+g-2}$ and $N_b$ such that $G'[\cup_{i=a}^{a+g-2} N_i \cup N_b] \cong G[\cup_{i=b-g+1}^{b} N_i]$ and the isomorphism maps the vertices of consecutive neighbourhoods to each other. Note that this yields again a $k$-regular graph of girth at least $g$. Fig.~\ref{fig:removingRepeatableSubgraph} shows an example where the repeatable graph from Fig.~\ref{fig:repeatableK3G4} is removed from a larger graph $G$.

\begin{figure}[h]
    \centering

    \begin{tikzpicture}[scale=0.66]

    \foreach \x in {-1}{
        \foreach \y in {-1,-0.333,0.333,1}{
        \draw[fill] (\x,\y) circle (0.15);
        }
    }
    
    \foreach \x in {0}{
        \foreach \y in {-1,1}{
        \draw[fill] (\x,\y) circle (0.15);
        }
    }
    \foreach \x in {1}{
        \foreach \y in {0}{
        \draw[fill] (\x,\y) circle (0.15);
        }
    }
    \foreach \x in {2}{
        \foreach \y in {0}{
        \draw[fill] (\x,\y) circle (0.15);
        }
    }
    \foreach \x in {3}{
        \foreach \y in {-1,1}{
        \draw[fill] (\x,\y) circle (0.15);
        }
    }
    \foreach \x in {4}{
        \foreach \y in {-1,1}{
        \draw[fill] (\x,\y) circle (0.15);
        }
    }
    \foreach \x in {5}{
        \foreach \y in {0}{
        \draw[fill] (\x,\y) circle (0.15);
        }
    }
    \foreach \x in {6}{
        \foreach \y in {0}{
        \draw[fill] (\x,\y) circle (0.15);
        }
    }
    \foreach \x in {7}{
        \foreach \y in {-1,1}{
        \draw[fill] (\x,\y) circle (0.15);
        }
    }

    \foreach \x in {8}{
        \foreach \y in {-1,-0.333,0.333,1}{
        \draw[fill] (\x,\y) circle (0.15);
        }
    }
    
   \draw (1,0)--(0,-1);
   \draw (1,0)--(0,1);
   \draw (2,0)--(1,0);
   \draw (3,-1)--(2,0);
   \draw (3,1)--(2,0);
   \draw (4,-1)--(3,-1);
   \draw (4,-1)--(3,1);
   \draw (4,1)--(3,1);
   \draw (4,1)--(3,-1);
   \draw (5,0)--(4,-1);
   \draw (5,0)--(4,1);
   \draw (6,0)--(5,0);
   \draw (7,-1)--(6,0);
   \draw (7,1)--(6,0);

   \draw (-1,-1)--(-1,-0.333);
   \draw (-1,-0.333)--(-1,0.333);
   \draw (-1,-1) .. controls +(-0.5,-0.0) and +(-0.5,-0.0) .. (-1,1); 
   \draw (-1,1)--(-1,0.333);
   
   \draw (0,-1)--(-1,-0.333); 
   \draw (0,-1)--(-1,1); 
   \draw (0,1)--(-1,-1); 
   \draw (0,1)--(-1,0.333); 

   \draw (8,-1)--(8,-0.333);
   \draw (8,-0.333)--(8,0.333);
   \draw (8,-1) .. controls +(+0.5,-0.0) and +(+0.5,-0.0) .. (8,1); 
    \draw (8,1)--(8,0.333);
    
   \draw (7,-1)--(8,-0.333); 
   \draw (7,-1)--(8,1); 
   \draw (7,1)--(8,-1); 
   \draw (7,1)--(8,0.333); 

   \draw[->,thick] (9,0) -- ++(1,0);

   \foreach \x in {11}{
        \foreach \y in {-1,-0.333,0.333,1}{
        \draw[fill] (\x,\y) circle (0.15);
        }
    }
    
    \foreach \x in {12}{
        \foreach \y in {-1,1}{
        \draw[fill] (\x,\y) circle (0.15);
        }
    }
    \foreach \x in {13}{
        \foreach \y in {0}{
        \draw[fill] (\x,\y) circle (0.15);
        }
    }
    \foreach \x in {14}{
        \foreach \y in {0}{
        \draw[fill] (\x,\y) circle (0.15);
        }
    }
    \foreach \x in {15}{
        \foreach \y in {-1,1}{
        \draw[fill] (\x,\y) circle (0.15);
        }
    }

    \foreach \x in {16}{
        \foreach \y in {-1,-0.333,0.333,1}{
        \draw[fill] (\x,\y) circle (0.15);
        }
    }
    
   \draw (13,0)--(12,-1);
   \draw (13,0)--(12,1);
   \draw (14,0)--(13,0);
   \draw (15,-1)--(14,0);
   \draw (15,1)--(14,0);

   \draw (11,-1)--(11,-0.333);
   \draw (11,-0.333)--(11,0.333);
   \draw (11,-1) .. controls +(-0.5,-0.0) and +(-0.5,-0.0) .. (11,1); 
   \draw (11,1)--(11,0.333);
   
   \draw (12,-1)--(11,-0.333); 
   \draw (12,-1)--(11,1); 
   \draw (12,1)--(11,-1); 
   \draw (12,1)--(11,0.333); 

   \draw (16,-1)--(16,-0.333);
   \draw (16,-0.333)--(16,0.333);
   \draw (16,-1) .. controls +(+0.5,-0.0) and +(+0.5,-0.0) .. (16,1); 
    \draw (16,1)--(16,0.333);
    
   \draw (15,-1)--(16,-0.333); 
   \draw (15,-1)--(16,1); 
   \draw (15,1)--(16,-1); 
   \draw (15,1)--(16,0.333); 
   
    \end{tikzpicture} 
\caption{Removing a repeatable subgraph ($k=3$, $g=4$, $a=1$, $b=8$, $d=9$).}\label{fig:removingRepeatableSubgraph}
\end{figure}
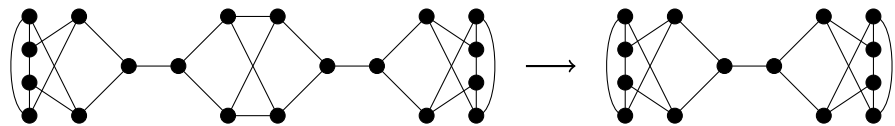

    Removing iteratively a repeatable subgraph of minimum length, if all ratios $\frac{|N_a \cup \ldots \cup N_{b-g}|}{b-a+1-g}$ would exceed $f(k,g)$, one deduces a contradiction as follows.
    (This is similar to~\cite[page 3]{CJ25}, with the difference that not every neighbourhood needs to be bounded.) Due to the finite number of combinations, the ratio $\frac{|N_a \cup \ldots \cup N_{b-g}|}{b-a+1-g}$ is always at least a fixed constant $c> f(k,g).$
    One can repeat the removal process until the diameter of the resulting graph is smaller than some fixed value $x$, depending on the $O_{k,g}(1)$ term.
    
    Initially, we start with a graph of order $n=f(k,g)d+O_{k,g}(1).$
    After removing sufficiently many repeatable graphs, the diameter is $d'<x$ and the order is $n'<f(k,g)d-c(d-d')+O_{k,g}(1)\le  f(k,g)x - (c-f(k,g))(d-x)+O_{k,g}(1) .$ 
    Since $d$ could be chosen arbitrarily large at the start, for $d$ sufficiently large we have $n'<0$, which is a contradiction.
\end{proof}

\section{Determining $n(k;g,d)$ for several open cases}\label{sec:determined_bounds}

In this section, we first theoretically determine the values of $n(k; g,d)$, and count the number of girth-diameter cages for $(k,g)=(3,4)$ and $(k,g)=(3,5)$ in \cref{sec:k3g4}. Then, in \cref{sec:gen_algo}, we present an exhaustive generation algorithm for $\kgd$-graphs and use it to computationally determine several more values of $n\kgd$ and the corresponding $\kgd$-cages.


\subsection{Theoretical determination of $n(3;g,d)$ for $g \in \{4,5\}$}\label{sec:k3g4}

In what follows, we determine $n\kgd$ when $(k,g) \in \{(3,4),(3,5)\}$.
Hence the $O_{k,g}(1)$ term for these values is determined exactly. 
The fact that this is in general a hard task, can be concluded from the computational results in Appendix~\ref{sec:tables_generator}, e.g.\ observe the values for $(k,g)=(3,7)$ when $7 \mid d$;
for $d \in \{7,14,21\},$ the lower bound $M(k; g,d)$ is sharp, but for $d \in \{28,35\}$ it is not. 
The data seems to suggest that $n(k; g,d)=n(k; g,d-7)+24$ for $d \ge 23$ and all further results may be analogous to the cases where $(k,g) \in \{(3,4),(3,5)\}.$

\begin{proposition}\label{prop:kg=34scherp}
    Let $d = 5+4k+j$, where $0 \le j \le 3$ and $k\ge 0.$ 
    A $(3;4,d)$-cage $G$ has at least $k$ bridges and the order of $G$ is $n=14+2j+6k.$ 
\end{proposition}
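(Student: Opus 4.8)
The plan is to pin down $n(3;4,d)=14+2j+6k$ by matching lower and upper bounds and then to read off the bridge count from the structure of any extremal graph; throughout the graph is cubic (degree $3$), and $k$ denotes the parameter in $d=5+4k+j$, not the degree. For the upper bound I would give an explicit construction realizing equality. The basic link is a copy of $K_{3,3}$ with one edge deleted: it has $6$ vertices and two vertices $v,w$ of degree $2$ with $d(v,w)=3$. Chaining $k$ such links in a path by joining consecutive degree-$2$ endpoints produces a graph in which every joining edge is a bridge and every full link advances the distance by $4$ while costing $6$ vertices (matching $f(3,4)=M(3,4)/4=3/2$). I would cap the two ends with a small $j$-dependent gadget built from $K_{3,3}$ (for instance a $K_{3,3}$ with one edge subdivided, which exposes a degree-$2$ attachment point), chosen so that the two caps together contribute $14+2j$ vertices and push the diameter to exactly $5+4k+j$, while keeping the girth equal to $4$ (the $K_{3,3}$-blocks contain $4$-cycles) and the whole graph $3$-regular. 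Verifying girth, $3$-regularity, the exact order, and that the diameter is attained is a routine check done separately for each residue $j\in\{0,1,2,3\}$, with the four graphs at $k=0$ supplying the caps; this exhibits a $(3;4,d)$-graph of the claimed order with exactly $k$ bridges.

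For the lower bound I would fix a diametral pair $u,v$ with $d(u,v)=d$ and analyse the distance layers $N_i=N_i(u)$. The workhorse is the width-$4$ window inequality $\sum_{i=1}^{4}|N_{q+i}|\ge M(3,4)=6$, obtained exactly as in the proof of \cref{prop:imprLowerBound} by rooting a Moore tree at an edge. The two endpoints are treated separately: since the girth is $4$ one has $|N_0|=1$ and $|N_1|=3$, and a short argument gives $|N_2|\ge 3$ for $d$ large (if $|N_2|\le 2$ the three neighbours of $u$ together with their common neighbours already close up into a $K_{3,3}$, forcing diameter $2$). Hence the three layers nearest $u$ contribute at least $7$, symmetrically for $v$, so that $n\ge 14+\sum_{i=3}^{d-3}|N_i|$. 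It then remains to prove the interior estimate $\sum_{i=3}^{d-3}|N_i|\ge 6k+2j$, where the $d-5=4k+j$ interior layers are grouped into $k$ disjoint width-$4$ windows (each $\ge 6$) together with $j$ residual layers that must contribute at least $2$ apiece.

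The residual estimate is both the delicate point and the source of the bridge count. A layer with $|N_i|=1$ is a genuine cut: its unique vertex has three edges, at least one going to $N_{i-1}$ and at least one to $N_{i+1}$, so one of these two edge-cuts has size $1$ and is a bridge; thus every interior layer of size $1$ forces a bridge. Conversely, a bridgeless cubic girth-$4$ graph has all interior layers of size $\ge 2$, hence $n\ge 2d=10+8k+2j$, which exceeds $14+6k+2j$ once $k\ge 3$; so efficiency genuinely requires bridges. I would classify the layer-size patterns that keep every width-$4$ window close to its minimum $6$, showing they are rigid and coincide with the repeating pattern $1,2,2,1$ of the bridge-chain, in which each pair of consecutive size-$1$ layers sits at a bridge. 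This rigidity yields simultaneously the interior bound $\sum_{i=3}^{d-3}|N_i|\ge 6k+2j$ (so $n\ge 14+2j+6k$, matching the construction, whence equal to $n(3;4,d)$) and, via a charging argument tying bridges to pairs of consecutive cheap layers, that any minimiser has at least $k$ bridges. The small cases $d\in\{5,6,7,8\}$, where the two endpoint ranges overlap, I would dispose of directly.

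The main obstacle is precisely this interior rigidity analysis: the crude window bound only yields $n\ge 6k+8+O(1)$, so the entire difficulty lies in extracting the exact additive constant $14$ and the term $2j$, which means ruling out every $2$-edge-connected ``long and thin'' competitor (ladders and their relatives) that might undercut the bridge-chain. The quantitative heart of this is the dichotomy above — size-$1$ interior layers force bridges while their absence forces the wasteful rate of $2$ vertices per unit of diameter — sharpened to control the exact number of cheap layers; here the parity constraint that each side of a bridge has odd order (the degree sum on a side equals $3\cdot(\text{size})-1$) keeps the admissible block sizes, and hence the case analysis, finite.
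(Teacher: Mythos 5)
Your construction for the upper bound matches the paper's (chains of $K_{3,3}^-$ gadgets glued to small end caps), and your overall strategy for the lower bound -- distance layers from $u$, width-$4$ window inequalities, endpoints contributing $7$ each -- is a genuinely different decomposition from the one the paper uses. The paper instead cuts the graph along its bridges and bounds each resulting block directly: an end block of eccentricity $i$ has order at least $2i+3$, and a gadget between two bridges contributing $i$ to the diameter has order at least $2i-2$, with the deficit of $2$ below the ``rate'' $2i$ available only to such bridged gadgets. This immediately yields $n \ge 2d+4-2m$ where $m$ is the number of efficient gadgets, so reaching $2d+4-2k=14+2j+6k$ forces $m\ge k$ and hence at least $k$ bridges, with no layer-pattern classification needed.

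The gap in your version is the interior estimate $\sum_{i=3}^{d-3}|N_i|\ge 6k+2j$ together with the bridge count. Your stated justification -- that the $j$ residual layers ``must contribute at least $2$ apiece'' -- is false layer-by-layer: interior layers of size $1$ do occur, indeed they occur in the extremal graphs themselves (the pattern $1,2,2,1$ you describe), so a residual layer landing on such a position contributes only $1$. Repairing this requires exactly the ``rigidity classification'' of layer-size patterns that you name as the main obstacle but do not carry out; note for instance that for $j=3$ even adding the parity of $|V(G)|$ to the naive bound $14+6k+j$ leaves you two vertices short of $20+6k$, so something substantive is needed there. Similarly, your route to ``at least $k$ bridges'' rests on the unexecuted charging argument pairing bridges with consecutive size-$1$ layers (and your bridgeless dichotomy as stated only rules out bridgeless cages for $k\ge 3$, though using your own endpoint bound of $7$ per side fixes that to $k\ge 1$). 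As it stands, both halves of the proposition -- the exact constant $14+2j$ and the bound on the number of bridges -- are deferred to analysis that is announced rather than done; the paper's block decomposition is the cleaner way to close both at once.
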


\begin{proof}
    Let $u$ and $v$ be vertices such that $d(u,v)=d.$
    Notice that $\abs{N_1(u)}=\abs{N_1(v)}=3$ and $\abs{N_2(u)}, \abs{N_2(v)} \ge 3$ (if $N_1(u)$ has exactly $3$ neighbours, $G$ being regular would imply that $G=K_{3,3}$ and $d=2$).
    If $G$ has no bridges (equivalently, no cutvertices), then $\abs{N_i(u)} \ge 2$ for every $3\le i \le d-3$, which implies $n\ge 14+2j+8k.$
    If $G$ has bridges, we consider the components of such a graph with the bridges removed. Here we distinguish the two end blocks (i.e., first and last neighbourhood) containing one of the two vertices $u$ and $v$, and the gadgets which contain two vertices of degree two. A gadget which has distance $i-1$ between the degree two vertices and thus contributing $i$ to the diameter, has either order at least $8>2i$ for $i\in \{2,3\}$, or order at least $2i-2$ if $i \ge 4$.
    An end block containing $u$, where $u$ has eccentricity $i$, contains at least $2i+3$ vertices.
    The number of gadgets for which the order is less than twice the contributed distance, is upper bounded by $k$.
    This results in $n \ge 2d+4-2k,$ as desired.
    By inserting gadgets $K_{3,3}^-$ ($K_{3,3}$ minus one edge) to $(3;4,d)$-cages for small $d$, it is easy to see that the bound is tight.
\end{proof}

With a careful analysis of components, we can also characterise the $(3;4,d)$-cages, and in particular count them.

\begin{proposition}\label{prop:kg34scherp}
    The number of $(3;4,d)$-cages for $d\ge 9$ equals
    $$
    \begin{cases}
	1  & \text{for } d \equiv 1 \pmod 4 \\
        4 & \text{for } d \equiv 2 \pmod 4\\
        17+\floorfrac{d}{8} & \text{for } d \equiv 3 \pmod 4\\
        27+d+\floorfrac{d-4}{8} & \text{for } d \equiv 0 \pmod{4}.
	\end{cases}
	$$
\end{proposition}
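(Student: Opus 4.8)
The plan is to leverage the structural information obtained in the proof of \cref{prop:kg=34scherp}. Write $d = 5 + 4k + j$ with $0 \le j \le 3$. As shown there, a $(3;4,d)$-cage $G$ attains $n = 2d + 4 - 2k$, so after deleting all bridges, $G$ decomposes into a path-like (caterpillar) arrangement of $2$-connected blocks joined by bridges, in which exactly $k$ blocks save two vertices relative to the baseline of two vertices per unit of diameter. First I would set up this decomposition precisely: two \emph{end blocks} contain the diametral vertices $u$ and $v$, and the internal blocks are \emph{gadgets} carrying exactly two degree-$2$ vertices through which the bridges attach.

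The core of the argument is a classification of the minimal pieces. I would prove that every internal gadget achieving the efficient ratio (contributing $i$ to the diameter with only $2i-2$ vertices) must be $K_{3,3}$ minus an edge, forcing $i = 4$; the remaining $5+j$ units of diameter must then be realised at the baseline cost of two vertices per unit, which pins down the admissible end blocks and the \emph{ladder-type} connectors (runs of $4$-cycles supplying one unit of diameter per two vertices) used to absorb the residue $j$. This reduces the enumeration of cages to counting the ways of arranging $k$ copies of the $K_{3,3}^-$ gadget together with the finitely many admissible end blocks and the residue-absorbing ladder rungs into a linear chain realising diameter exactly $d$, taken up to the reversal symmetry of the chain.

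With the classification in hand, I would count separately for each residue of $d$ modulo $4$. For $j=0$ there is no slack: the chain of $k$ gadgets between the two forced end blocks is rigid, yielding the single cage. For $j=1$ the single residue rung can slide freely, so its position contributes nothing, and the constant $4$ records the remaining structural choices at the ends. For $j \in \{2,3\}$ the residue rungs may be clustered or separated along the backbone, and their relative separations are isomorphism invariants; counting these separations up to the reversal symmetry of the chain is a palindrome count, which is precisely where the $d$-dependent terms originate. A placement symmetric under reversal is halved and contributes the $\floorfrac{d}{8}$ (respectively $\floorfrac{d-4}{8}$) term, an asymmetric placement is not halved and contributes the linear term $d$ for $j=3$, and the constants $17$ and $27$ collect the boundary configurations and the admissible end blocks.

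I expect the main obstacle to be twofold. First, proving that the classification of minimal gadgets and end blocks is \emph{exhaustive} --- that no exotic $2$-connected block of girth $4$ meets the efficiency bound --- requires a careful case analysis of small $3$-regular girth-$4$ structures. Second, the exact isomorphism-class bookkeeping is delicate: one must correctly handle the reversal symmetry of the chain, the interaction between this symmetry and the positions of the residue pieces, and the boundary cases where a residue piece meets an end block, in order to recover the precise constants and floor terms rather than merely their asymptotics. I would cross-check the resulting closed forms against the exhaustive computational data reported in Appendix~\ref{sec:tables_generator} for the smallest admissible values of $d$.
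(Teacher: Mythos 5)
Your high-level strategy coincides with the paper's: decompose a cage along its bridges into two end blocks and a chain of internal gadgets (using \cref{prop:kg=34scherp}), classify the admissible pieces, and count linear arrangements up to reversal. The problem is that the classification you propose to \emph{prove} is false, and the counting framework you build on it cannot produce the stated formulas. It is not true that the only gadget attaining order $2i-2$ while contributing $i$ to the diameter is $K_{3,3}^-$ with $i=4$: there are also unique gadgets of order $10$ and $12$, with neighbourhood-size profiles $(1,2,2,2,2,1)$ and $(1,2,2,2,2,2,1)$, contributing $i=6$ and $i=7$ respectively (for the first, take two copies of $K_{2,2}$ joined by a perfect matching between one side of each and cap each end with a degree-$2$ vertex). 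These longer gadgets are not a dispensable technicality: in the correct count it is precisely the choice of position of such a gadget within the chain of $K_{3,3}^-$'s, taken up to reversal when the two end blocks agree, that produces the $\floorfrac{d}{8}$, $\floorfrac{d-4}{8}$ and linear-in-$d$ terms. Your substitute --- ``ladder-type connectors'' or ``residue rungs'' sliding along the backbone --- does not exist as a separate object in the bridge decomposition: every internal component of $G$ minus its bridges has exactly two degree-$2$ vertices and must be one of the three gadgets above, so with your classification there would be nothing left to place and no source for the $d$-dependent terms.

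There is a second, quantitative gap at the end blocks. The admissible end blocks fall into eight neighbourhood-size profiles ($133$, $1341$, $1332$, $13421$, $13331$, $134221$, $133231$, $133321$), and several profiles are realised by more than one non-isomorphic block (three blocks of profile $1341$, five of profile $13421$, etc.). The constants $1$, $4$, $17$ and $27$ arise as sums of products of these multiplicities over the admissible pairings, not as generic ``boundary configurations''; without determining the multiplicities the closed forms cannot be recovered. Relatedly, your account of $d \equiv 2 \pmod 4$ is wrong in its mechanism: swapping in a longer gadget changes $d$ by $2$ or $3$ modulo $4$, never by $1$, so there is no sliding piece in that case, and the count $4=3+1$ comes entirely from the two profiles of eccentricity-$3$ end blocks and their multiplicities (had there been a sliding piece, its position would be an isomorphism invariant and would contribute, contrary to your claim that it ``contributes nothing''). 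With the gadget list corrected to three types and the end-block multiplicities computed, your positional/palindromic bookkeeping would essentially reproduce the paper's proof.
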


\begin{proof}
    Since we study the case $d \ge 9$, by~\cref{prop:kg=34scherp}, we know that every $(3;4,d)$-cage has at least one bridge.
    For the end blocks, we can consider the sizes $(1, \abs{N_1(u)}, \abs{N_2(u)}, \ldots, \abs{N_i(u)})$ (and analogous for $v$).
    Since the end block needs to have $2i+3$ vertices in total and have $i \le 6$, taking into account the other conditions, there are only a few possibilities for the tuples. The following pairs consist of 1) a string representing a concatenation of the neighbourhood sizes and 2) the number of possibilities for the end block:
$$(133, 1), 
(1341, 3),
(1332, 1), 
(13421, 5),$$
$$(13331, 2), 
(134221, 3), 
(133231, 2), 
(133321, 2) .$$

The gadgets are unique based on the sizes 
\begin{equation*}
    (1,2,2,1),(1,2,2,2,2,1) \text{ or } (1,2,2,2,2,2,1)
\end{equation*}
of the neighbourhoods $N_i(w)$, where $w$ is a degree two vertex. 

For $d\equiv 1 \pmod 4,$ the only optimal combination is when two $133$s are combined with $1221$s, which gives a unique cage.

\begin{figure}[h]
    \centering

    \begin{tikzpicture}[scale=1]

        \foreach \y in {-1,0,1}{
        \draw[fill] (1,\y) circle (0.15);
        \draw[fill] (2,\y) circle (0.15);
        \draw (0,0)--(1,\y);
        }

        \draw (1,0)--(2,1)--(1,1);
        \draw (2,1)--(1,-1);
        \draw (1,1)--(2,0)--(2,-1);
        \draw(1,-1)--(2,-1)--(1,0);

        \foreach \y in {-0.5,0.5}{
        \draw[fill] (4,\y) circle (0.15);
        \draw[fill] (5,\y) circle (0.15);
        \draw (6,0)--(5,0.5)--(4,\y)--(3,0);
        \draw (6,0)--(5,-0.5)--(4,\y);
        }
\draw[fill] (0,0) circle (0.15);
        \draw[fill] (3,0) circle (0.15);
        \draw[fill] (6,0) circle (0.15);
   
    \end{tikzpicture} 
\caption{Example of end block and gadget which are decoded by $133$ and $1221$.}\label{fig:133&1221}
\end{figure}
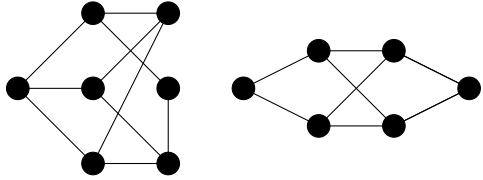

For $d \equiv 2 \pmod 4$, we can combine one $1341$ or $1332$ with a $133$ and some $1221$s.\\
For $d \equiv 3 \pmod 4$, not mentioning the $1221$s,
one can have the combinations
$133+122221+133$, $1341+1341$, $1341+1332$, $1332+1332$, $13421+133$ and $13331+133$.
This results in $\floorfrac{d}{8}+6+3+1+5+2=17+\floorfrac{d}{8}$ possible girth-diameter cages.

For $d\equiv 0 \pmod 4,$ the options are
$133+1222221+133$, $133+122221+1332$, $133+122221+1341$, 
$1341+13421$, $1332+13421$, $1341+13331$, $1332+13331$, $134221+133$, $133231+133$ and $133321+133$.
This results in $\floorfrac{d-4}{8}+(3+1)\frac{d-4}{8}+(1+3)(5+2)+(3+2+2)=27+d+\floorfrac{d-4}{8}$ possible girth-diameter cages.
\end{proof}

\begin{proposition}\label{prop:kg=35scherp}
    For $d \ge 5$, $n(3;5,d) = 2d+10$ if $d \equiv 0,1 \pmod 5$ and $n(3;5,d)= 2d+8$ if $d \equiv 2,3,4 \pmod 5$.
    For $2 \le d \le 4$, $n(3;5,d)= 2d+6.$
\end{proposition}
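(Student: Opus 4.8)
The plan is to sandwich $n(3;5,d)$ between the lower bound of \cref{prop:imprLowerBound} and an explicit family of constructions, and to close the single remaining gap with a parity argument.

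First I would specialise \cref{prop:imprLowerBound} to $(k,g)=(3,5)$, where $t=2$ and $M(3,5)=10$ (the Petersen graph being the $(3,5)$-Moore graph). For $d\ge 5$ write $d=5+5r+s$ with $r\ge 0$ and $s\in\{0,1,2,3,4\}$, so that $s$ is the residue of $d$ modulo $5$ and $M'(3;5,d)=(r+2)\cdot 10+M(3,s)$. Using $M(3,0)=0$, $M(3,1)=1$, $M(3,2)=2$, $M(3,3)=4$, $M(3,4)=6$, a short computation gives $M'(3;5,d)=2d+10$ for $d\equiv 0$, then $M'(3;5,d)=2d+9$ for $d\equiv 1$, and $M'(3;5,d)=2d+8$ for $d\equiv 2,3,4 \pmod 5$. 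For the remaining range $2\le d\le 4$ the first case of \cref{prop:imprLowerBound} yields $M'(3;5,d)=10+M(3,2d-5)$, namely $10,11,14$ for $d=2,3,4$. Thus the lower bound already matches the claimed value for $d\equiv 0,2,3,4\pmod 5$ (with $d\ge 5$) and for $d\in\{2,4\}$, and falls short by exactly one in the two cases $d\equiv 1\pmod 5$ and $d=3$.

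To remove this shortfall I would invoke parity: every $3$-regular graph has $3n/2$ edges, so its order $n$ is even. Since the two deficient lower bounds $2d+9$ (for $d\equiv 1\pmod 5$) and $11$ (for $d=3$) are odd, they improve automatically to $2d+10$ and $12$ respectively, matching the claimed values in all cases. It then remains only to exhibit constructions attaining these values.

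For the upper bounds I would build, for each residue class and each small $d$, a cubic graph of girth $5$, the claimed order, and diameter exactly $d$. The case $d=2$ is the Petersen graph itself, and $d=5$, $n=20$ is the dodecahedron; for $d=3,4$ and for one representative of each residue class $d\in\{5,6,7,8,9\}$ (of orders $20,22,22,24,26$) I would give explicit small graphs, found by hand or, in keeping with the computational part of the paper, verified by the generation algorithm of \cref{sec:gen_algo}. To reach all larger diameters I would use the chaining idea from the proof of \cref{prop:upp_bnd}, inserting copies of the Petersen graph with one edge $vw$ removed. Because the girth is $5$, in the Petersen graph $d(v,w)=1$ but the shortest $v$--$w$ path avoiding that edge has length exactly $4$; hence each inserted gadget lengthens the relevant shortest path by $4$ and the connecting edge adds $1$, so each insertion increases the diameter by exactly $5$ and the order by exactly $10$. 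Starting from the base representative of residue class $\rho$ (order $2d_\rho+c_\rho$ with $c_\rho\in\{8,10\}$) and inserting $m$ such gadgets produces diameter $d_\rho+5m$ and order $2(d_\rho+5m)+c_\rho$, so the additive constant is preserved and every $d\equiv\rho\pmod 5$ with $d\ge 5$ is realised.

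The main obstacle is the verification inside these constructions rather than the arithmetic. Concretely, one must check that inserting a gadget neither creates a cycle shorter than $5$ at the two junction edges (which follows from a local analysis, since any new short cycle would have to traverse a connector and the $v$--$w$ distance inside the gadget is $4$) nor pushes the diameter above the intended value; establishing that the diameter is \emph{exactly} $d$ requires confirming that the farthest pair remains the two end vertices $u,v$ of the linear backbone and that no pair inside a cap or straddling a junction exceeds distance $d$. Producing base representatives that simultaneously hit the parity-corrected order and the exact diameter for each of the five residues is the most delicate bookkeeping, but once these finitely many base graphs and the single length-$5$ gadget are fixed, the extension argument is uniform.
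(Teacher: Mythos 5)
Your proposal is correct and follows essentially the same route as the paper: the lower bound comes from the consecutive-neighbourhood inequalities behind \cref{prop:imprLowerBound} together with the evenness of the order of a cubic graph (which closes the deficit of exactly one in the cases $d\equiv 1\pmod 5$ and $d=3$), and the upper bound comes from chaining copies of the Petersen graph minus an edge onto finitely many small base graphs, each insertion adding $10$ to the order and $5$ to the diameter. The girth/exact-diameter bookkeeping you flag is the same bookkeeping the paper leaves implicit by pointing to a figure, so there is no substantive divergence.
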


\begin{proof}
    The proof of~\cref{prop:imprLowerBound} explained $\sum_{i=1}^{j} |N_{q+i}| \geq M(k,j)$, which for $j=5$ and $k=3$ implies here that $5$ consecutive neighbourhoods cover at least $M(3,5)=10$ vertices. We also note that $\abs{N_0}+\abs{N_1}\ge 4$, $\abs{N_0}+\abs{N_1}+\abs{N_2} \ge 10$, $\abs{N_d}+\abs{N_{d-1}}+\abs{N_{d-2}} \ge 10$, and
    $\abs{N_i} \ge 1$, $\abs{N_i}+\abs{N_{i+1}}+\abs{N_{i+2}}\ge 4$ for every $0\le i\le d-2$. 
    Finally, we know that the order of a cubic graph is even. 
    This implies the lower bound for all cases.

    For $ 2\le d \le 4$, equality is attained by e.g.\ the Petersen graph, Twinplex graph and Banffrican graph (graphs with HoG-id $660, 27409, 50487$ at the House of Graphs~\citep{HOG}).
    
    For $d \ge 5,$
    we can paste Petersen graphs with a little modification together, to match the observed bounds. 
    An example is presented in~\cref{fig:n34d13} for $d \equiv 3 \pmod 5$ (by modifying the left or right to make it more symmetric, one also obtains $d \equiv 2,4 \pmod 5$), where it is easy to adjust the center with one less or adding multiple copies of $P_{5,2}^-$ (Petersen graph minus one edge).

    For $d \equiv 0,1 \pmod 5$, we can adjust the construction as presented (in~\cref{fig:n34d13}) with the red curvy lines replacing $5$ other edges. 
\end{proof}

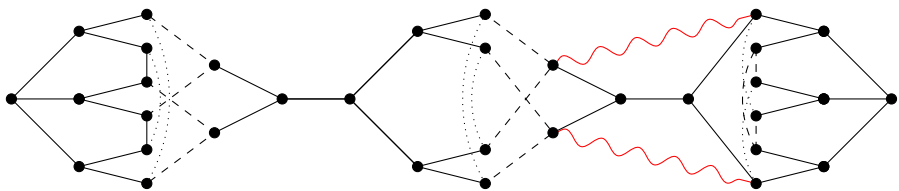
\begin{figure}[h]
    \centering
    \begin{tikzpicture}[scale=0.45]
           \foreach \x in {0,2,8,10,18,20,24,26}{
        \draw[fill] (\x,0) circle (0.15);
    
    }

    \draw[red,decorate, decoration=snake, segment length=5mm] (16,1)--(22,2.5);
    \draw[red,decorate, decoration=snake, segment length=5mm] (16,-1)--(22,-2.5);

    \draw[dotted] (4,1.5) arc (30:-30:3) ;
    \draw[dotted] (4,2.5) arc (30:-30:5) ;

    \draw[dashed] (22,-1.5) arc (210:150:3) ;
    \draw[dotted] (22,-2.5) arc (210:150:3) ;
    \draw[dotted] (22,-0.5) arc (210:150:3) ;

    \draw[dotted] (14,-2.5) arc (210:150:5) ;
    \draw[dotted] (14,-1.5) arc (210:150:3) ;

    \draw[dashed] (22,-1.5)--(22,-0.5);
    \draw[dashed] (22,1.5)--(22,0.5);

    \draw[dashed] (4,-0.5)--(6,1)--(4,2.5);
     \draw[dashed] (4,0.5)--(6,-1)--(4,-2.5);
     \draw (22,-2.5)--(20,0)--(22,2.5);
     \draw (18,0)--(20,0);

    \foreach \y in {0,2,-2}{
        \draw[fill] (2,\y) circle (0.15);
        \draw (0,0)--(2,\y);  
         \draw (26,0)--(24,\y);  
         \draw[fill] (24,\y) circle (0.15);
    }
    
    \foreach \y in {0,2,-2}{
    \draw (4,\y+0.5)--(2,\y);
    \draw (4,\y-0.5)--(2,\y);
        \draw[fill] (4,\y+0.5) circle (0.15);
    \draw[fill] (4,\y-0.5) circle (0.15);
    }

    \draw[fill] (4,1.5)--(4,0.5);
    \draw[fill] (4,-1.5)--(4,-.5);

    \foreach \y in {1,-1}{
        \draw[fill] (16,\y) circle (0.15); 
        \draw[fill] (6,\y) circle (0.15);  
        \draw (6,\y)--(8,0)--(10,0);
    }

    \foreach \y in {2,-2}{
        \draw[fill] (12,\y) circle (0.15);
        \draw[fill] (14,\y+0.5) circle (0.15);
        \draw[fill] (14,\y-0.5) circle (0.15);
        \draw (10,0)--(12,\y)--(14,\y+0.5);   
         \draw (10,0)--(12,\y)--(14,\y-0.5);    
    }
    \draw[dashed] (14,-1.5)--(16,1)--(14,2.5);
     \draw[dashed] (14,1.5)--(16,-1)--(14,-2.5);

     \draw (16,1)--(18,0)--(16,-1);

     \foreach \y in {0,2,-2}{
        \draw[fill] (24,\y) circle (0.15);
        \draw[fill] (22,\y+0.5) circle (0.15);
        \draw[fill] (22,\y-0.5) circle (0.15);
        \draw (22,\y-0.5)--(24,\y)--(22,\y+0.5);     
    }
    
    \end{tikzpicture}
    \caption{$(3,5,13)$-graph of order $34$, easily modified to $(3,5,d)$-graph for $10\le d \le 14$.}
    \label{fig:n34d13}
\end{figure}

By a careful analysis, one can also count the number of $(3;5,d)$-cages.

\begin{proposition}\label{prop:kg35scherp}
    The number of $(3;5,d)$-cages for $d\ge 6$ equals
    $$
    \begin{cases}
        d-\floorfrac{d}{10}  & \text{for } d \equiv 1 \pmod 5 \\
		1  & \text{for } d \equiv 2 \pmod 5 \\
        4 & \text{for } d \equiv 3 \pmod 5 \\
        10 & \text{for } d \equiv 4 \pmod 5 \\
        128+11.3d & \text{for } d \equiv 0 \pmod{10} \\
         138.5+11.3d & \text{for } d \equiv 5 \pmod{10}.          
	\end{cases}
	$$
\end{proposition}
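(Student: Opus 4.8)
The plan is to mirror the block-decomposition argument used for Proposition~\ref{prop:kg34scherp}. Because the lower bound of Proposition~\ref{prop:kg=35scherp} is tight only for very rigid configurations, I would first show that for $d \geq 6$ every $(3;5,d)$-cage decomposes, along its bridges, into two \emph{end blocks} containing the diametral vertices $u$ and $v$, together with a linear chain of \emph{gadgets}, each gadget carrying two degree-$2$ vertices through which consecutive bridges attach. The prototypical gadget is $P_{5,2}^-$: a short breadth-first computation shows that its two degree-$2$ vertices lie at distance $4$ and that the neighbourhood-size sequence from one of them is $(1,2,4,2,1)$, so a gadget together with its incoming bridge contributes exactly $5$ to the diameter and $10$ to the order. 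This matches the slope $n(3;5,d) = 2d + O(1)$ exactly, and since every window of five consecutive neighbourhoods must cover at least $M(3,5)=10$ vertices, no gadget can do better; hence every interior gadget is forced to realise this optimal ratio.

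Next I would enumerate, up to isomorphism, the finite lists of admissible end blocks and gadgets, exactly as the neighbourhood-size strings were tabulated in Proposition~\ref{prop:kg34scherp}. The end block containing $u$ is tightly constrained: since $\abs{N_0(u)}=1$ and $u$ has degree $3$, the bound $\abs{N_0(u)}+\abs{N_1(u)}\geq 4$ forces $\abs{N_1(u)}=3$; the bound $\abs{N_0(u)}+\abs{N_1(u)}+\abs{N_2(u)}\geq 10$ then forces $\abs{N_2(u)}\geq 6$, so the first three layers already form a Petersen-like structure, and the window and parity conditions leave only boundedly many profiles for each eccentricity of $u$ inside its block. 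In parallel I would determine the complete list of optimal gadgets --- $P_{5,2}^-$ and any other cubic girth-$5$ configuration on $10$ vertices with two degree-$2$ vertices at distance $4$ --- together with which connection orientations preserve girth $5$ and cubicity.

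With these lists fixed, the count in each residue class reduces to counting concatenations of one left end block, a gadget chain, and one right end block whose contributed distances sum to $d$ and whose orders sum to $n(3;5,d)$, subject to the constraint that the total order be even. For $d \equiv 2,3,4 \pmod 5$ the additive slack is only $8$ and the chain is essentially forced, so the count collapses to the number of admissible end-block pairs, giving the constants $1$, $4$ and $10$. For $d \equiv 0,1 \pmod 5$ the extra slack of $2$ --- the modification drawn with red curvy lines in \cref{fig:n34d13}, which replaces five edges --- can be inserted at a number of positions growing linearly in $d$, producing the linear terms $d-\floorfrac{d}{10}$ and $11.3\,d$; the refinement from $d \bmod 5$ to $d \bmod 10$ in the last two cases reflects a two-orientation parity constraint on where that modification may legally be placed, which is precisely what produces the floor term $\floorfrac{d}{10}$ and the coefficient $\frac{113}{10}$.

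The hardest part will be the combinatorial bookkeeping in the $d \equiv 0,5 \pmod{10}$ cases. There one must count sequences of gadgets with a bounded number of special insertions \emph{up to} the reflection symmetry interchanging the two ends and up to permutations of identical gadgets, so an inclusion--exclusion over end-block pairs and insertion patterns is needed to avoid double-counting isomorphic cages; pinning down the exact constants $128$ and $138.5$ together with the coefficient $\frac{113}{10}$ is where the tedium concentrates. As elsewhere in the paper, the step most open to error --- completeness of the enumerated block and gadget lists, i.e.\ the claim that no exotic extremal configuration has been overlooked --- is the one I would cross-check against the exhaustive generation of \cref{sec:gen_algo}.
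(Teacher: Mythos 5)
Your overall strategy --- decomposing a cage along its bridges into two end blocks and a chain of gadgets, classifying the admissible neighbourhood-size profiles of each piece, and counting concatenations per residue class --- is exactly the route the paper takes, and your identification of the three lossless profiles $(1,3,6,1)$, $(1,3,6,2,1)$ and $(1,2,4,2,1)$, with the resulting counts $1$, $4$, $10$ for $d\equiv 2,3,4 \pmod 5$, matches the paper. However, for the remaining residues your write-up is a plan rather than a proof: the entire content of the linear formulas lies in two enumerations you never carry out. First, one must determine precisely which \emph{suboptimal} components (those losing exactly $2$ vertices against the $2d+8$ slope) can occur; the paper lists these by their size profiles, namely $(1,3,6,4,2,1)$, $(1,3,6,2,4,2,1)$, $(1,3,6,2,2,4,2,1)$, $(1,2,4,4,2,1)$, $(1,2,4,2,2,4,2,1)$ and $(1,2,4,2,2,2,4,2,1)$. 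Second, one must count the non-isomorphic graphs realizing each profile ($60$ for $(1,3,6,4,2,1)$, $6$ each for $(1,2,4,4,2,1)$ and $(1,3,6,2,2,4,2,1)$, and a unique one for the others); the coefficient $11.3=\frac{113}{10}$ and the constants $128$ and $138.5$ are arithmetic consequences of these specific numbers combined with the count of legal positions for the long gadgets, so they cannot be ``produced'' by the qualitative observation that the number of insertion positions grows linearly in $d$.

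A second, smaller inaccuracy: you attribute the linear growth to inserting the red-curvy-line modification of \cref{fig:n34d13} at linearly many positions. In the paper's count the linear terms come from placing an extended \emph{gadget} such as $(1,2,4,2,2,4,2,1)$ or $(1,2,4,2,2,2,4,2,1)$ at one of $\Theta(d)$ slots along the chain of optimal $(1,2,4,2,1)$ gadgets (with a reflection symmetry responsible for the $\floorfrac{d}{10}$ correction and for the split of $d\equiv 0\pmod 5$ into two classes mod $10$), not from a local re-wiring of five edges. Your instinct to cross-check the completeness of the block and gadget lists against the exhaustive generation of \cref{sec:gen_algo} is sound and is indeed what the authors do, but as written the proposal does not yet establish the stated formulas.
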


\begin{proof}
    Except for some small values of $d$, by the proof of~\cref{prop:kg=35scherp} and the observation that no $4$ consecutive neighbourhoods can have size $2$, the extremal graphs have bridges.
    For the components of such a graph with the bridges removed, we can consider the ones (end blocks) with the vertices $u,v$ for which $d(u,v)=d$, and the ones (gadgets) with two vertices of degree $2.$

    For an end block, we start with $N_0$ equal to $u$ or $v$
    and consider the next neighbourhoods.
    Then the sizes $(\abs{N_0(v)}, \abs{N_1(v)},\abs{N_2(v)}, \ldots, \abs{N_i(v)})$ (here $i$ is the eccentricity of $v$ in this end block) have the property that $\abs{N_0(v)}=1, \abs{N_1(v)}=3,\abs{N_2(v)}=6$ and $\abs{N_i(v)}=1$.
    If $i\ge 5,$ $\abs{N_j(v)}=2$ for every $3 \le j \le i-1$ different from $i-2$ since $\abs{N_{i-2}(v)}=4$ is necessary. Also $i \le 8$, by the observation on consecutive neighbourhoods of size $2$.

    For a gadget with degree two vertices $w$ and $x$, $(|N_0(w)|, \allowbreak |N_1(w)|, \allowbreak |N_2(w)|, \allowbreak \ldots, \allowbreak |N_i(w)|)$ (here $i$ is the eccentricity of $w$ in this gadget), we have $\abs{N_0(w)}=1, \abs{N_1(w)}=2,\abs{N_2(w)}=4$ and $\abs{N_i(w)}=1, \abs{N_{i-1}(w)}=2,\abs{N_{i-2}(w)}=4$.
    Here $i \le 8$ again and $i=5$ is impossible.
    
    The only sequences that do not have an additional loss compared to the bound $2d+8$ are     
    $(1,3,6,1)$, $\allowbreak (1,3,6,2,1)$, $\allowbreak (1,2,4,2,1).$
    
    The combination of these leads to $d \equiv 2,3,4 \pmod 5$.

    The components with sizes $(1,3,6,1)$ and $(1,2,4,2,1)$ are unique.
    For the end block with sizes $(1,3,6,2,1)$, there are $4$ possibilities.
    This gives $1, 1 \cdot 4=4$ and $\binom{4}{2}+4=10$ possibilities resp.\ for $d \equiv 2,3,4 \pmod 5$.

    A gadget with sizes $(1,3,6,2,2,2,4,1)$ can only be extended to potential extremal graphs with $d \equiv 2,3 \pmod 5$, which is not extremal.

    For the other sizes, we list the number of possible components.
    
    \begin{claim}
        There is only one possible component with sizes $(1,3,6,2,4,2,1)$, \allowbreak $(1,2,4, \allowbreak 2,2,4,2,1)$ or \allowbreak $(1,2,4,2,2,2,4,2,1)$. For sizes $(1,2,4,4,2,1)$ and $(1,3,6,2,2,4,2,1)$ there are $6$ for each tuple.
        For sizes $(1,3,6,4,2,1)$ there are $60$ options.
    \end{claim}

    Not mentioning the filling with the unique optimal gadget $(1,2,4,2,1)$, we have the following options.

    For $d \equiv 0 \pmod{5} $ with $d \ge 10$,
    there are $4 \times 60$ possibilities that combine $(1,3,6,4,2,1)$ and $(1,3,6,2,1)$.
    There is one option using $(1,3,6,2,4,2,1)$ and $(1,3,6,1).$
    There are $\floorfrac{d-5}{10}$ extremal graphs having components $(1,2,4,2,2,4,2,1)$ and two $(1,3,6,1)s.$
    There are $112$ using two $(1,3,6,2,1)$s and a $(1,2,4,2,2,4,2,1)$ at fixed non-central location ($\floorfrac{d-5}{10}$ choices), and $66$ options if the latter is central (which only can happen if $d \equiv 5 \pmod{10}.$

    For $d \equiv 1 \pmod 5$, we have again $4$ compositions.

There are four using $(1,3,6,2,4,2,1)$ and $(1,3,6,2,1),$ and six using $(1,3,6,2, \allowbreak 2,4,2,1)$ and $(1,3,6,1)$.

There are four per location of the $(1,2,4,2,2,4,2,1)$ gadget between end blocks $(1,3,6,1)$ and $(1,3,6,2,1).$
This implies an additional four per increase of $d$ with $5$.
There is one per location of $(1,2,4,2,2,2,4,2,1)$ gadget between two $(1,3,6,1)s.$
By symmetry, the increase happens when $d$ increases by $10.$
\end{proof}

\subsection{Further values of $n(k;g,d)$ with exhaustive generation}\label{sec:gen_algo}

Using the exhaustive generation algorithm for $\kgd$-graphs which we will describe in this section, 
we could determine the value of $n\kgd$ for $\numCombsKgdGenerated$ different triples $(k,g,d)$ of which $\numOrderGenDeterminedNew$ triples are ``new'' (in the sense that they have not yet been determined in \cref{sec:k3g4} of this paper or in the literature). We considered triples with $k,g,d$ in the following sets: $k \in \{3, \ldots, 7\}$, $g \in \{4, \ldots, 8, 10\}$ and $d \in \{2, \ldots, 40\}$. For all $\numCombsKgdGenerated$ triples we determine at least one $\kgd$-cage and we determine the exhaustive set for $\numExhGen$ triples. The $(k;g,d)$-cages we obtained with the algorithm can be downloaded from~\citep{githubRepo} and a selection of these cages can be found on the House of Graphs~\citep{HOG} by searching for the keyword ``girth-diameter cages''.

\newcommand{\backtrack}{recursivelyAddEdges}
\newcommand{\validEdges}{E_{\textrm{add}}}

In the rest of this subsection we will describe our generation algorithm and explain the sanity checks that we performed to verify the correctness of our implementation.

Our algorithm, whose pseudocode can be found in Algorithm~\ref{alg:genGirthDiamGraphs} and~\ref{alg:backtrack}, generates all $(k; g, d)$-graphs of order $n$.
It is based on the generation algorithm for $(k, g)$-graphs by McKay, Myrvold and Nadon~\citep{McKay1998} complemented with an extra pruning rule by the same authors and Exoo~\citep{Exoo2011}. Their algorithm receives as input $k,g$ and the order $n$. It starts from a $(k,g)$-Moore tree, adds $n-M(k,g)$ isolated vertices, and recursively adds edges one by one. To limit the search space, several pruning rules are applied. Additionally, a heuristic is applied to choose the next edge to add. In particular, the algorithm first adds the edge incident to the vertex $x$ that has the fewest available incident edges that can be added without violating the degree or girth constraint. As mentioned in~\citep{McKay1998}, this heuristic was one of the factors for the significant speedup compared to the previous best algorithm for $(k, g)$-graphs described in~\citep{Bri95}.

Our algorithm extends theirs with new optimisations to incorporate the diameter constraint. 
Contrary to their algorithm, we start from the tree $T$ of order $M'(k;g,d)$ corresponding to \cref{prop:imprLowerBound}. However, when $g=2t$ and $d=2t-1$, we start from the tree of girth $g-1$ (instead of $g$) since this tree contains more vertices, i.e., $M'(k;2t-1,2t-1) > M'(k;2t,2t-1)$, and is still a subgraph of any $(k;2t,2t-1)$-graph. After the construction of the tree, we proceed in a similar way: add isolated vertices until there are $n$ vertices and recursively add edges with an analogous heuristic that also incorporates the diameter. In addition to the pruning rules of~\citep{Exoo2011,McKay1998}, we also prune the search space by disallowing an edge that would decrease the distance between the two vertices $u,v$ at distance $d$ in $T$. All of this is described in more detailed pseudocode in Algorithm~\ref{alg:genGirthDiamGraphs} and~\ref{alg:backtrack}.

\begin{algorithm}[htb]
	\caption{\algoName{genGirthDiamGraphs}($n,k,g,d)$}\label{alg:genGirthDiamGraphs}
    \If{$n < M\kgd$ or $d < \floor{g/2}$}{
        \Comment{If the order $n$ is smaller than $M\kgd$ or if $d < \floor{g/2}$, there are definitely no graphs.}
        \Return
    }
    \Comment{Make tree corresponding to \cref{prop:imprLowerBound}.}
    $T \gets \algoName{makeTree}(n,k,g,d)$\\
    \Comment{Add $n-M\kgd$ isolated vertices to $T$.}
    $G_{start} \gets \algoName{addIsoVertices}(T, n-M\kgd)$\\
    \Comment{Determine which edges can be added without exceeding the  degree $k$ and without violating the girth $g$ and diameter $d$}
    $\validEdges \gets \algoName{calcValidAddableEdges}(G_{start}, k, g, d)$\\
    \Comment{Heuristic: edges will be added incident with the vertex $x$ that has the fewest options in $\validEdges$}
    $x \gets \arg\min_{w \in V(G), \degree{w}<r}(|\{e \colon e \in \validEdges \text{ and } w \in e \}|)$ \\
    $\algoName{\backtrack}(G_{start}, \validEdges, k, g, d, x)$ \Comment{See Algorithm~\ref{alg:backtrack}}
\end{algorithm}

\begin{algorithm}[htbp]
\DontPrintSemicolon
	\caption{\algoName{\backtrack}($G, \validEdges, k, g, d, x$)}\label{alg:backtrack}
	\Comment{$x$: current vertex to add edges to}
    \If{method was called with graph that is isomorphic with $G$ (where the isomorphism maps the two distinguished vertices whose distance is $d$ to each other)}{\Return}
        \If(\Comment*[h]{$G$ is a $k$-regular graph.}){$\forall v \in V(G) \colon \degree{v}=k$}{
            \If{ $g(G) = g$ and $diam(G)=d$}{
                output $G$ \\ 
		      }
        \Return \\
        }
        \Comment{Choose new $x$, if the current vertex $x$ has been completed to degree $k$.}
    	\If{$\degree{x}=k$}{
            $x \gets \arg\min_{w \in V(G), \degree{w}<k}(|\{e \colon e \in \validEdges \text{ and } w \in e \}|)$ \\
        }
    \Comment{Apply pruning rule from~\citep{Exoo2011} on all $\validEdges$.}
		$\validEdges \gets \algoName{pruneValidEdgesExoo}(G, \validEdges)$ \\
		\If{$\neg \textit{enoughValidEdgesLeft}(G, \validEdges)$}{\Return}
    \Comment{Try adding each valid edge incident to $x$.}    
    \ForEach{$w \in \{w \colon \edge{x}{w} \in \validEdges\}$}{
            \Comment{Copy $\validEdges$ in $prev\validEdges$ to restore it after removing $\edge{x}{w}$ again.}
            $prev\validEdges \gets \validEdges$\\
			$E(G) \gets E(G) \cup \{\edge{x}{w}\}$\\
            \Comment{Remove edges from $\validEdges$ resulting from adding edge $\edge{x}{w}$.}
            $\validEdges \gets \algoName{updateValidAddableEdges}(G, \validEdges, k, g, d)$\\
            \Comment{Check if we can prune $\edge{x}{w}$.}
            \If{none of the pruning rules can be applied}{ \label{lst:pruningChecks}
				$\algoName{\backtrack}(G, \validEdges, k, g, d, x)$ \\
			}
			$E(G) \gets E(G) \setminus \{\edge{x}{w}\}$\\
            \Comment{Restore $\validEdges$ and remove $\edge{x}{w}$ from it.}
            $\validEdges \gets prev\validEdges \setminus \{\edge{x}{w}\}$\\
			\If{$ \neg \textit{enoughValidEdgesLeft}(G, \validEdges)$}{
				\Return \\
			}
	}
\end{algorithm}

The graph isomorphism detection is done with \texttt{nauty}~\citep{nauty} by computing and comparing the canonical form of two graphs. Here it is important to notice that $u$ and $v$ are two distinguished vertices in our algorithm (i.e., the pruning rules related to the diameter treat $u$ and $v$ different from other vertices in the graph). Therefore, one has to only consider isomorphisms between two graphs that map these distinguished vertices to each other. To achieve this, we compute the canonical form of the graph $G'$ that is obtained by adding two vertices $u',v'$ and two edges $\edge{u}{u'}, \edge{v}{v'}$ to the original graph $G$. This leads $u$ and $v$ to be of degree $k+1$. Since no other vertex can be of degree $k+1$, $u$ and $v$ will never be mapped to other vertices. The source code of our implementation of this algorithm can be found in the GitHub repository \url{https://github.com/AGT-Kulak/girthDiamGen}~\citep{githubRepo}.

We ran the algorithm for various small values of $n,k,g,d$, leading to the determination of $n(k;g,d)$ and the corresponding cages. 
\cref{tab:gen0} and
\cref{tab:gen1,tab:gen2,tab:gen3,tab:gen4,tab:gen5} in Appendix~\ref{sec:tables_generator} show the results we obtained using our algorithm, i.e.\ the values of $n(k;g,d)$, the lower bound $M(k;g,d)$, (a lower bound on) the number of pairwise non-isomorphic $(k;g,d)$-cages and whether all generated $(k;g,d)$-cages are bipartite. In case $M(k;g,d)=n(k;g,d)$, both values are marked in bold. The generated $(k;g,d)$-cages can also be found in \texttt{graph6} format on the GitHub repository~\citep{githubRepo}.

\begin{table}[h]
\centering
\begin{tabular}{r r r | r r | l l }
\toprule
 $k$ & $g$ & $d$ & $M(k;g,d)$ & $n(k;g,d)$ & Number of cages & All bipartite \\ 
\midrule
3 & 6 & 3 & \textbf{14} & \textbf{14} & 1 & Yes \\
3 & 6 & 4 & \textbf{16} & \textbf{16} & 1 & Yes \\
3 & 6 & 5 & \textbf{20} & \textbf{20} & 6 & Yes \\
3 & 6 & 6 & \textbf{28} & \textbf{28} & 3016 & Yes \\
3 & 6 & 7 & 28 & 30 & 8 & No \\
3 & 6 & 8 & 29 & 32 & 7 & No \\
3 & 6 & 9 & 30 & 34 & 9 & No \\
3 & 6 & 10 & 32 & 36 & 6 & No \\
3 & 6 & 11 & 34 & 38 & 6 & No \\
3 & 6 & 12 & 38 & 44 & 13953 & No \\
3 & 6 & 13 & 42 & 44 & 1 & No \\
3 & 6 & 14 & 43 & 46 & 2 & No \\
3 & 6 & 15 & 44 & 48 & 6 & No \\
3 & 6 & 16 & 46 & 50 & 6 & No \\
3 & 6 & 17 & 48 & 52 & 6 & No \\
3 & 6 & 18 & 52 & 58 & 13987 & No \\
3 & 6 & 19 & 56 & 58 & 1 & No \\
3 & 6 & 20 & 57 & 60 & 2 & No \\
\bottomrule
\end{tabular}
\caption{Values of $M(k;g,d)$, $n(k;g,d)$, number of $(k;g,d)$-cages and bipartiteness for $(3;6,d)$ with $d\in \{3,\ldots,20\}$. In case $M(k;g,d)=n(k;g,d)$, both values are marked in bold.}
\label{tab:gen0}
\end{table}

Lastly, we explain which extra steps we took to ensure the correctness of the implementation of our
exhaustive generation algorithm. We used the generator \texttt{GENREG}~\citep{Meringer1999} to generate $(k,g)$-graphs of order $n$ (for small values of $n,k,g$) and split the graphs based on the diameter $d$, to obtain all $(k; g, d)$-graphs.
More specifically, we checked for $k=3$ and $g \in \{3, \ldots, 8\}$ and for $k \in \{4,5\}$ and $g \in \{3,4,5\}$ for orders $n(k,g), n(k,g)+1, \ldots, n(k,g)+C$, where $C$ was chosen such that \texttt{GENREG} was able to finish the computation within 100 CPU hours for the given $k,g$ and $n$. For each $n,k,g,d$, we obtained exactly the same set of $(k; g, d)$-graphs with our generator as \texttt{GENREG}. 

Besides that, we checked that we obtained all $(k,g)$-cages (for small $k,g$) with our generator by running it for the corresponding diameter of these $(k,g)$-cages. Moreover, Araujo-Pardo et al.\ found $(k;5,4)$-cages for $k \in \{3,4,5,6\}$~\citep{ACGKL25}. For each $k$, we obtained the same number of $(k;5,4)$-cages with our algorithm as Araujo-Pardo et al. Finally, we also verified that the results of our generator agree with the theoretical results for $(3;g,d)$ with $g \in \{4,5\}$ in \cref{sec:k3g4} (both for the order and number of girth-diameter cages).

\section{Further observations on $(k;g,d)$-cages}\label{sec:conclusion}

\subsection{Remark on $(k;3,3)$}

	Knor~\citep{Knor2014} considered the question of finding the minimum order of a $k$-regular graph of diameter $d$, which we will denote by $n(k;d)$. He determined $n(k;d)$ for all $k\geq2$ and $d\geq1$. He also describes $k$-regular graphs with diameter $d$ of order $n(k;d)$. For $k\geq3$ the described families have girth 3, besides for $d=3$, where the family has girth 4. Hence, we know $n(k;3,d)$  for all $d \neq3$. In \cref{prop:orderk33} we ``complete'' the determination of $n(k;3,d)$ by proving the order for $d=3$. 
	
    \begin{proposition}\label{prop:orderk33}
		 For all $k\geq3$, it holds that 
		 \begin{equation*}
		 	n(k;3,3)=2(k+1)
		  \end{equation*}
	\end{proposition}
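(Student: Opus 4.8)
The plan is to prove the two matching inequalities $n(k;3,3) \geq 2(k+1)$ and $n(k;3,3) \leq 2(k+1)$. For the lower bound I would invoke \cref{prop:imprLowerBound} with $g=3$, so $t=\lfloor g/2\rfloor = 1$ and $d=3$. Writing $d = 2t+1+rg+s$ forces $r=s=0$, so the bound reads $M'(k;3,3) = (r+2)M(k,3)+M(k,s) = 2M(k,3) = 2(k+1)$, using $M(k,3)=k+1$ and $M(k,0)=0$. More transparently, if $u,v$ are vertices with $d(u,v)=3$, then the closed neighbourhoods $\{u\}\cup N_1(u)$ and $\{v\}\cup N_1(v)$ must be disjoint (any shared vertex, or any adjacency between them, would force $d(u,v)\leq 2$), and each has exactly $k+1$ vertices by $k$-regularity, whence $n \geq 2(k+1)$.

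For the upper bound I would exhibit an explicit $(k;3,3)$-graph of order $2(k+1)$. Take two vertex-disjoint copies $A,B$ of $K_{k+1}$, single out distinct vertices $a_1,a_2 \in A$ and $b_1,b_2 \in B$, delete the edges $a_1a_2$ and $b_1b_2$, and add the two ``rung'' edges $a_1b_1$ and $a_2b_2$; call the result $G$. By construction $|V(G)| = 2(k+1)$ and $G$ is $k$-regular, since the four modified vertices each lose exactly one edge and gain exactly one, while every other vertex keeps its original degree $k$. Because $k+1 \geq 4$, each of $A$ and $B$ still contains a triangle after one edge is deleted, so $G$ has girth $3$.

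The main work, and the only real obstacle, is confirming that $\diam(G)=3$. Here I would first note that every vertex of $A\setminus\{a_1,a_2\}$ has all of its neighbours inside $A$ and every vertex of $B\setminus\{b_1,b_2\}$ has all of its neighbours inside $B$; since $k-1 \geq 2$ such interior vertices exist, and an interior pair $x\in A$, $y\in B$ shares no neighbour and is non-adjacent, we get $d(x,y)\geq 3$, while $x\to a_1 \to b_1 \to y$ realises distance exactly $3$, so $\diam(G)\geq 3$. For the reverse, within a single copy any two vertices are at distance at most $2$ (the deleted-edge endpoints being the only non-adjacent pair), and for a cross pair $p\in A$, $q\in B$ one routes through a matched bridge pair: $p \to a_i \to b_i \to q$ has length at most $3$ whenever $p$ reaches $a_i$ and $b_i$ reaches $q$ in one step. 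A short case check over the exceptional pairs $p\in\{a_1,a_2\}$, $q\in\{b_1,b_2\}$ (switching to the other rung, or detouring through an interior vertex, when one route is blocked) covers all remaining distances, giving $\diam(G)=3$. Combining the two bounds yields $n(k;3,3)=2(k+1)$; I expect the diameter verification to require the most care, but it reduces to the elementary distance computations just sketched.
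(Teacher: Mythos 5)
Your proof is correct. The lower bound is exactly the paper's argument: two vertices at distance $3$ have disjoint closed neighbourhoods, each of size $k+1$ by regularity, giving $n(k;3,3)\ge 2(k+1)$ (and your remark that this also falls out of \cref{prop:imprLowerBound} with $r=s=0$ is accurate). The only divergence is the witness graph for the upper bound. The paper starts from the complete bipartite graph $K_{k+1,k+1}$, deletes a perfect matching together with the edges $u_1v_2,u_2v_1$, and adds $u_1u_2,v_1v_2$; the single swap creates the required triangle while the matching deletion pushes two vertices to distance $3$. You instead take two copies of $K_{k+1}$, delete one edge in each, and join them by the two rungs $a_1b_1,a_2b_2$. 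Both graphs have order $2(k+1)$, are $k$-regular, have girth $3$, and have diameter exactly $3$; I checked your degree count, the surviving triangle (which needs $k\ge 3$, as you note), the pair $a_3,b_3$ realising distance $3$, and the exceptional cross pairs such as $a_1,b_2$ (distance $3$ via $a_1b_1b_3b_2$), and all of it holds. Your construction is arguably the more natural one for girth $3$, since the triangles come for free from the cliques; the paper's has the mild structural interest of being a local perturbation of the bipartite extremal graph relevant to the girth-$4$ case. Either witness completes the proof.
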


    \begin{proof}
        Since the two vertices at distance $3$ have disjoint closed neighbourhoods, $n(k;3,3) \geq 2(k+1)$.
    
        Let $K_{k+1,k+1}$ be a complete bipartite graph with bipartition classes $U=\{u_0, u_1, \ldots, u_k\}$ and $V=\{v_0, v_1, \ldots, v_k\}$.
        Let $M=\{u_iv_i \colon 0 \le i \le k\}$ be a perfect matching.
        Let $G = K_{k+1,k+1} \setminus M \setminus \{u_1v_2, u_2v_1\} \cup \{u_1u_2, v_1v_2\}.$
        Now $\diam(G)=d(u_3,v_3)=3,$ while the girth is exactly three ($u_1u_2v_3$ is a triangle) and $G$ is $k$-regular. Hence $G$ certifies the equality $n(k;3,3) = 2(k+1)$.
    \end{proof}

\subsection{Behaviour of $(k;g,d)$-cages}\label{sec:behaviour}

Despite the existence of non-bipartite $(k;g,d)$-cages with even $g$, we can observe in \cref{tab:gen0} and \cref{tab:gen1,tab:gen2,tab:gen3,tab:gen4,tab:gen5} (in the Appendix) that all generated even girth $(k;g,d)$-cages with $d \le g$ are bipartite.
In other words, we only found non-bipartite $(k;g,d)$-cages for $d > g$. Often this is the case exactly when $G$ has a cutvertex (a regular bipartite graph cannot have a cutvertex), as is the case for the $(3; 6,12)$-cages, but e.g.\ the graph depicted in~\cref{fig:nonbip445cage} (also added as \url{https://houseofgraphs.org/graphs/54022} on the House of Graphs~\citep{HOG}) is a $2$-connected non-bipartite $(4; 4,5)$-cage.

\begin{figure}[h]
    \centering

    \begin{tikzpicture}[scale=0.6]

    \foreach \x in {0,10}{
        \foreach \y in {-1.5,1.5,0}{
        \draw[fill] (\x,\y) circle (0.25);
        }
    }

    \foreach \x in {-1.5,1.5,0.5,-0.5}{
    \foreach \y in {-1.5,1.5,0}{
    \draw (2,\x)--(0,\y);
    \draw (8,\x)--(10,\y);
    }
    }

\foreach \x in {2,8}{
        \foreach \y in {-1.5,1.5,0.5,-0.5}{
        \draw[fill] (\x,\y) circle (0.25);
        }
    }

\foreach \x in {1.5,0.5}{
        \draw (8,\x)--(6,1);
        \draw (2,\x)--(4,1);
        \draw (8,-\x)--(6,-1);
        \draw (2,-\x)--(4,-1);
        }

     \draw (6,-1)--(4,-1)--(4,1)--(6,1)--(6,-1);
    \foreach \x in {4,6}{
        \foreach \y in {1,-1}{
        \draw[fill] (\x,\y) circle (0.25);
        }
    }
    \end{tikzpicture} 

\caption{A $2$-connected non-bipartite $(4; 4,5)$-cage.}\label{fig:nonbip445cage}
\end{figure}
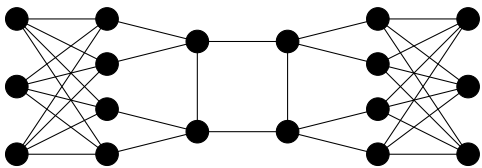

Moreover, $(k;g,d)$-cages with $d>g$ can never be $(k,g)$-cages since Sauer proved that every $(k,g)$-cage $G$ satisfies $\diam(G) \le g$~\citep{Sau1967}. 
Hence, if it holds that every $(k;g,d)$-cage with $g$ even and $d \le g$ is bipartite, then this proves the longstanding conjecture that every even girth $(k,g)$-cage is bipartite~\citep{EJ08,wong1982cages}.

Therefore, the following question can be of interest.

\begin{question}\label{ques:ques_onbip}
	For even $g$ and $d \le g$, is every $(k;g,d)$-cage bipartite?
\end{question}

Notice that~\cref{prop:imprLowerBound_specialimprovement} is sharp for the smallest case, $g=4$, by observing the extremal graphs $K_{k,k}$, $K_{k+1,k+1}\backslash M$ (a perfect matching $M$ removed), and the graph formed from two copies of $K_{k,k}^-$ ($K_{k,k}$ minus one edge) connected with edges between the degree $(k-1)$-vertices. Equality in the latter case can only be attained by a $k$-regular (connected) bipartite graph of order $4k$ for which there are two vertices $u,v$ in the same bipartition class with $N(u) \cap N(v) = \emptyset$.

Also for $g=6,8$ and $12$ one can conclude that the answer for Question~\ref{ques:ques_onbip} is positive for $d \in \{g/2,g\}$ whenever a $(k,g)$-Moore graph exists.

Further investigating $n\kgd$-graphs for $\floorfrac{g}{2} < d \le g$ may lead to improvements on the gap between the Moore bound and the known upper bound on the order of cages. In particular, a sharp bound on $n(k; g,g-1)$ might possibly be obtained by finding a good construction of a set of perfect matchings. The latter would already be interesting to think about for $g=10$.

\acknowledgements
\label{sec:ack}
Several of the computational resources used in this work were provided by the VSC (Flemish Supercomputer Center), funded by the Research Foundation - Flanders (FWO) and the Flemish Government.


\bibliographystyle{abbrv}
\bibliography{ref}
\label{sec:biblio}

\newpage
\appendix

\section{Details on computer verification used in \cref{prop:main_not_sharp}}\phantomsection\label{app:comp_ver}

In this section we present some details of the computer verification we used in the proof of \cref{prop:main_not_sharp}. Let $G'$ be the graph induced by $N_{i+j} \cup \ldots \cup N_{i+j+7}$ (as presented in the proof). We were left to prove that there exists no $G'$ such that $|N_{i+j}| + \ldots + |N_{i+j+6}|=22$ and $|N_{i+j}| = |N_{i+j+7}| = \alpha \in \{2,3\}$. We obtained this result by making use of \texttt{multigraph}, an unpublished generator written by Gunnar Brinkmann, which is a generalisation of the generator for cubic graphs described in~\citep{Bri96}. The program \texttt{multigraph} generates all connected graphs of given minimum girth and given degree sequence (i.e., the amount of vertices of each degree). We want to generate graphs with $22+\alpha$ vertices. All vertices have degree $3$, except for the ones in $|N_{i+j}|$ and $|N_{i+j+7}|$. The ones in $N_{i+j}$ have degree $0, 1$ or $2$, and at least one in $N_{i+j}$ needs to have degree at least $1$.  The ones in $N_{i+j+7}$ have degree at most $3$, and at least one in $N_{i+j+7}$ has degree strictly less than $3$. We determined each degree sequence that satisfies the mentioned conditions (and such that the number of odd degree vertices is even) and were able to generate these graphs using \texttt{multigraph}. We then applied a simple filter program to the generated graphs that checks that each vertex of degree less than $3$ has at least one vertex at distance at least 7 (which is a necessary condition). Since none of the generated graphs passed this filter, we can conclude that no such $G'$ exists.

\section{Determined values of $n(k;g,d)$ with exhaustive generation}\phantomsection\label{sec:tables_generator}

\begin{table}[h]
\centering
\begin{tabular}{r r r | r r | l l }
\toprule
 $k$ & $g$ & $d$ & $M(k;g,d)$ & $n(k;g,d)$ & Number of cages & All bipartite \\ 
\midrule
3 & 4 & 2 & \textbf{6} & \textbf{6} & 1 & Yes \\
3 & 4 & 3 & \textbf{8} & \textbf{8} & 1 & Yes \\
3 & 4 & 4 & \textbf{12} & \textbf{12} & 4 & Yes \\
3 & 4 & 5 & 12 & 14 & 4 & No \\
3 & 4 & 6 & 13 & 16 & 7 & No \\
3 & 4 & 7 & 14 & 18 & 20 & No \\
3 & 4 & 8 & 16 & 20 & 38 & No \\
3 & 4 & 9 & 18 & 20 & 1 & No \\
3 & 4 & 10 & 19 & 22 & 4 & No \\
3 & 4 & 11 & 20 & 24 & 18 & No \\
3 & 4 & 12 & 22 & 26 & 40 & No \\
3 & 4 & 13 & 24 & 26 & 1 & No \\
\bottomrule
\end{tabular}
\caption{Values of $M(k;g,d)$, $n(k;g,d)$, number of $(k;g,d)$-cages and bipartiteness for $(3;4,d)$ with $d\in \{2,\ldots,13\}$. In case $M(k;g,d)=n(k;g,d)$, both values are marked in bold.}
\label{tab:gen1}
\end{table}

\begin{table}[h]
\centering
\begin{tabular}{r r r | r r | l l }
\toprule
 $k$ & $g$ & $d$ & $M(k;g,d)$ & $n(k;g,d)$ & Number of cages & All bipartite \\ 
\midrule
3 & 5 & 2 & \textbf{10} & \textbf{10} & 1 &  \\
3 & 5 & 3 & 11 & 12 & 2 &  \\
3 & 5 & 4 & \textbf{14} & \textbf{14} & 2 &  \\
3 & 5 & 5 & \textbf{20} & \textbf{20} & 470 &  \\
3 & 5 & 6 & 21 & 22 & 6 &  \\
3 & 5 & 7 & \textbf{22} & \textbf{22} & 1 &  \\
3 & 5 & 8 & \textbf{24} & \textbf{24} & 4 &  \\
3 & 5 & 9 & \textbf{26} & \textbf{26} & 10 &  \\
3 & 5 & 10 & \textbf{30} & \textbf{30} & 241 &  \\
3 & 5 & 11 & 31 & 32 & 10 &  \\
3 & 5 & 12 & \textbf{32} & \textbf{32} & 1 &  \\
3 & 5 & 13 & \textbf{34} & \textbf{34} & 4 &  \\
3 & 5 & 14 & \textbf{36} & \textbf{36} & 10 &  \\
3 & 5 & 15 & \textbf{40} & \textbf{40} & 308 &  \\
3 & 5 & 16 & 41 & 42 & 15 &  \\
3 & 5 & 17 & \textbf{42} & \textbf{42} & 1 &  \\
3 & 5 & 18 & \textbf{44} & \textbf{44} & 4 &  \\
3 & 5 & 19 & \textbf{46} & \textbf{46} & 10 &  \\
3 & 5 & 20 & \textbf{50} & \textbf{50} & 354 &  \\
3 & 5 & 21 & 51 & 52 & 19 &  \\
3 & 5 & 22 & \textbf{52} & \textbf{52} & 1 &  \\
3 & 5 & 23 & \textbf{54} & \textbf{54} & 4 &  \\
3 & 5 & 24 & \textbf{56} & \textbf{56} & 10 &  \\
3 & 5 & 25 & \textbf{60} & \textbf{60} & 421 &  \\
3 & 5 & 26 & 61 & 62 & 24 &  \\
3 & 5 & 27 & \textbf{62} & \textbf{62} & 1 &  \\
3 & 5 & 28 & \textbf{64} & \textbf{64} & 4 &  \\
3 & 5 & 29 & \textbf{66} & \textbf{66} & 10 &  \\
3 & 5 & 30 & \textbf{70} & \textbf{70} & 467 &  \\
3 & 5 & 31 & 71 & 72 & 28 &  \\
3 & 5 & 32 & \textbf{72} & \textbf{72} & 1 &  \\
3 & 5 & 33 & \textbf{74} & \textbf{74} & 4 &  \\
3 & 5 & 34 & \textbf{76} & \textbf{76} & 10 &  \\
3 & 5 & 35 & \textbf{80} & \textbf{80} & 534 &  \\
3 & 5 & 36 & 81 & 82 & 33 &  \\
3 & 5 & 37 & \textbf{82} & \textbf{82} & 1 &  \\
3 & 5 & 38 & \textbf{84} & \textbf{84} & 4 &  \\
3 & 5 & 39 & \textbf{86} & \textbf{86} & 10 &  \\
3 & 5 & 40 & \textbf{90} & \textbf{90} & 580 &  \\
\hline
3 & 6 & 3 & \textbf{14} & \textbf{14} & 1 & Yes \\
3 & 6 & 4 & \textbf{16} & \textbf{16} & 1 & Yes \\
3 & 6 & 5 & \textbf{20} & \textbf{20} & 6 & Yes \\
3 & 6 & 6 & \textbf{28} & \textbf{28} & 3016 & Yes \\
3 & 6 & 7 & 28 & 30 & 8 & No \\
3 & 6 & 8 & 29 & 32 & 7 & No \\
\bottomrule
\end{tabular}
\caption{Values of $M(k;g,d)$, $n(k;g,d)$, number of $(k;g,d)$-cages and bipartiteness for $(3;5,d)$ with $d\in \{2,\ldots,40\}$ and $(3;6,d)$ with $d\in \{3,\ldots,8\}$. In case $M(k;g,d)=n(k;g,d)$, both values are marked in bold.}
\label{tab:gen2}
\end{table}

\begin{table}[h]
\centering
\begin{tabular}{r r r | r r | l l }
\toprule
 $k$ & $g$ & $d$ & $M(k;g,d)$ & $n(k;g,d)$ & Number of cages & All bipartite \\ 
\midrule
3 & 6 & 9 & 30 & 34 & 9 & No \\
3 & 6 & 10 & 32 & 36 & 6 & No \\
3 & 6 & 11 & 34 & 38 & 6 & No \\
3 & 6 & 12 & 38 & 44 & 13953 & No \\
3 & 6 & 13 & 42 & 44 & 1 & No \\
3 & 6 & 14 & 43 & 46 & 2 & No \\
3 & 6 & 15 & 44 & 48 & 6 & No \\
3 & 6 & 16 & 46 & 50 & 6 & No \\
3 & 6 & 17 & 48 & 52 & 6 & No \\
3 & 6 & 18 & 52 & 58 & 13987 & No \\
3 & 6 & 19 & 56 & 58 & 1 & No \\
3 & 6 & 20 & 57 & 60 & 2 & No \\
\hline
3 & 7 & 4 & 23 & 24 & 1 &  \\
3 & 7 & 5 & \textbf{26} & \textbf{26} & 1 &  \\
3 & 7 & 6 & \textbf{32} & \textbf{32} & 19 &  \\
3 & 7 & 7 & \textbf{44} & \textbf{44} & $\geq$ 1 &  \\
3 & 7 & 8 & 45 & 48 & $\geq$ 1 &  \\
3 & 7 & 9 & 46 & 48 & $\geq$ 1 &  \\
3 & 7 & 10 & 48 & 50 & $\geq$ 1 &  \\
3 & 7 & 11 & \textbf{50} & \textbf{50} & $\geq$ 1 &  \\
3 & 7 & 12 & \textbf{54} & \textbf{54} & $\geq$ 1 &  \\
3 & 7 & 13 & \textbf{58} & \textbf{58} & $\geq$ 1 &  \\
3 & 7 & 14 & \textbf{66} & \textbf{66} & $\geq$ 1 &  \\
3 & 7 & 15 & 67 & 72 & $\geq$ 1 &  \\
3 & 7 & 16 & 68 & 72 & $\geq$ 1 &  \\
3 & 7 & 17 & 70 & 74 & $\geq$ 1 &  \\
3 & 7 & 18 & 72 & 74 & $\geq$ 1 &  \\
3 & 7 & 19 & 76 & 78 & $\geq$ 1 &  \\
3 & 7 & 20 & 80 & 82 & $\geq$ 1 &  \\
3 & 7 & 21 & \textbf{88} & \textbf{88} & $\geq$ 1 &  \\
3 & 7 & 22 & 89 & 94 & $\geq$ 1 &  \\
3 & 7 & 23 & 90 & 96 & $\geq$ 1 &  \\
3 & 7 & 24 & 92 & 98 & $\geq$ 1 &  \\
3 & 7 & 25 & 94 & 98 & $\geq$ 1 &  \\
3 & 7 & 26 & 98 & 102 & $\geq$ 1 &  \\
3 & 7 & 27 & 102 & 106 & $\geq$ 1 &  \\
3 & 7 & 28 & 110 & 112 & $\geq$ 1 &  \\
3 & 7 & 29 & 111 & 118 & $\geq$ 1 &  \\
3 & 7 & 30 & 112 & 120 & $\geq$ 1 &  \\
3 & 7 & 31 & 114 & 122 & $\geq$ 1 &  \\
3 & 7 & 32 & 116 & 122 & $\geq$ 1 &  \\
3 & 7 & 33 & 120 & 126 & $\geq$ 1 &  \\
3 & 7 & 34 & 124 & 130 & $\geq$ 1 &  \\
3 & 7 & 35 & 132 & 136 & $\geq$ 1 &  \\
\hline
3 & 8 & 4 & \textbf{30} & \textbf{30} & 1 & Yes \\
\bottomrule
\end{tabular}
\caption{Values of $M(k;g,d)$, $n(k;g,d)$, number of $(k;g,d)$-cages and bipartiteness for $(3;6,d)$ with $d\in \{9,\ldots,20\}$, $(3;7,d)$ with $d\in \{4,\ldots,35\}$ and $(3;8,d)$ with $d = 4$. In case $M(k;g,d)=n(k;g,d)$, both values are marked in bold.}
\label{tab:gen3}
\end{table}

\begin{table}[h]
\centering
\begin{tabular}{r r r | r r | l l }
\toprule
 $k$ & $g$ & $d$ & $M(k;g,d)$ & $n(k;g,d)$ & Number of cages & All bipartite \\ 
\midrule
3 & 8 & 5 & 32 & 34 & 1 & Yes \\
3 & 8 & 6 & \textbf{36} & \textbf{36} & 2 & Yes \\
3 & 8 & 7 & \textbf{44} & \textbf{44} & 802 & Yes \\
3 & 8 & 8 & \textbf{60} & \textbf{60} & $\geq$ 7170730 & All found graphs are bipartite \\
3 & 8 & 9 & 60 & 62 & 4 & No \\
3 & 8 & 10 & 61 & 64 & 1 & Yes \\
\hline
3 & 10 & 6 & 64 & 70 & 3 & Yes \\
3 & 10 & 7 & 68 & 72 & 1 & Yes \\
3 & 10 & 8 & 76 & 80 & 5 & Yes \\
3 & 10 & 9 & \textbf{92} & \textbf{92} & $\geq$ 19495 & All found graphs are bipartite \\
\hline
4 & 4 & 2 & \textbf{8} & \textbf{8} & 1 & Yes \\
4 & 4 & 3 & \textbf{10} & \textbf{10} & 1 & Yes \\
4 & 4 & 4 & \textbf{16} & \textbf{16} & 102 & Yes \\
4 & 4 & 5 & 16 & 18 & 10 & No \\
4 & 4 & 6 & 17 & 20 & 10 & No \\
4 & 4 & 7 & 18 & 22 & 7 & No \\
4 & 4 & 8 & 21 & 24 & 6 & Yes \\
4 & 4 & 9 & 24 & 26 & 6 & Yes \\
4 & 4 & 10 & 25 & 28 & 6 & Yes \\
4 & 4 & 11 & 26 & 30 & 6 & Yes \\
\hline
4 & 5 & 3 & 18 & 19 & 1 &  \\
4 & 5 & 4 & \textbf{22} & \textbf{22} & 4 &  \\
4 & 5 & 5 & \textbf{34} & \textbf{34} & $\geq$ 1 &  \\
4 & 5 & 6 & 35 & 37 & $\geq$ 1 &  \\
4 & 5 & 7 & 36 & 38 & $\geq$ 1 &  \\
4 & 5 & 8 & \textbf{39} & \textbf{39} & $\geq$ 1 &  \\
4 & 5 & 9 & 42 & 45 & $\geq$ 1 &  \\
4 & 5 & 10 & \textbf{51} & \textbf{51} & $\geq$ 1 &  \\
4 & 5 & 11 & 52 & 55 & $\geq$ 1 &  \\
4 & 5 & 12 & 53 & 57 & $\geq$ 1 &  \\
4 & 5 & 13 & 56 & 58 & $\geq$ 1 &  \\
4 & 5 & 14 & 59 & 62 & $\geq$ 1 &  \\
4 & 5 & 15 & \textbf{68} & \textbf{68} & $\geq$ 1 &  \\
4 & 5 & 16 & 69 & 73 & $\geq$ 1 &  \\
4 & 5 & 17 & 70 & 76 & $\geq$ 1 &  \\
4 & 5 & 18 & 73 & 77 & $\geq$ 1 &  \\
4 & 5 & 19 & 76 & 81 & $\geq$ 1 &  \\
4 & 5 & 20 & \textbf{85} & \textbf{85} & $\geq$ 1 &  \\
4 & 5 & 21 & 86 & 91 & $\geq$ 1 &  \\
4 & 5 & 22 & 87 & 95 & $\geq$ 1 &  \\
4 & 5 & 23 & 90 & 96 & $\geq$ 1 &  \\
\hline
4 & 6 & 3 & \textbf{26} & \textbf{26} & 1 & Yes \\
4 & 6 & 4 & \textbf{28} & \textbf{28} & 1 & Yes \\
4 & 6 & 5 & \textbf{34} & \textbf{34} & 146 & Yes \\
4 & 6 & 6 & \textbf{52} & \textbf{52} & $\geq$ 396779 & All found graphs are bipartite \\
\bottomrule
\end{tabular}
\caption{Values of $M(k;g,d)$, $n(k;g,d)$, number of $(k;g,d)$-cages and bipartiteness for $(3;8,d)$ with $d\in \{5,\ldots,10\}$, $(3;10,d)$ with $d\in \{6,\ldots,9\}$, $(4;4,d)$ with $d\in \{2,\ldots,11\}$, $(4;5,d)$ with $d\in \{3,\ldots,23\}$ and $(4;6,d)$ with $d\in \{3,\ldots,6\}$. In case $M(k;g,d)=n(k;g,d)$, both values are marked in bold.}
\label{tab:gen4}
\end{table}

\begin{table}[h]
\centering
\begin{tabular}{r r r | r r | l l }
\toprule
 $k$ & $g$ & $d$ & $M(k;g,d)$ & $n(k;g,d)$ & Number of cages & All bipartite \\ 
\midrule
4 & 6 & 7 & 52 & 54 & 14 & Yes \\
4 & 6 & 8 & 53 & 56 & 4 & Yes \\
4 & 6 & 9 & 54 & 60 & 272 & Yes \\
\hline
4 & 8 & 4 & \textbf{80} & \textbf{80} & 1 & Yes \\
\hline
5 & 4 & 2 & \textbf{10} & \textbf{10} & 1 & Yes \\
5 & 4 & 3 & \textbf{12} & \textbf{12} & 1 & Yes \\
5 & 4 & 4 & \textbf{20} & \textbf{20} & 176581 & Yes \\
5 & 4 & 5 & 20 & 22 & 19 & Yes \\
5 & 4 & 6 & 21 & 24 & 9 & Yes \\
5 & 4 & 7 & 22 & 28 & 537 & No \\
5 & 4 & 8 & 26 & 32 & 25408 & No \\
\hline
5 & 5 & 3 & 27 & 30 & 4 &  \\
5 & 5 & 4 & \textbf{32} & \textbf{32} & 7 &  \\
\hline
5 & 6 & 3 & \textbf{42} & \textbf{42} & 1 & Yes \\
5 & 6 & 4 & 44 & 46 & 1 & Yes \\
5 & 6 & 5 & \textbf{52} & \textbf{52} & $\geq$ 440 & All found graphs are bipartite \\
\hline
6 & 4 & 2 & \textbf{12} & \textbf{12} & 1 & Yes \\
6 & 4 & 3 & \textbf{14} & \textbf{14} & 1 & Yes \\
6 & 4 & 4 & \textbf{24} & \textbf{24} & $\geq$ 20343552 & All found graphs are bipartite \\
6 & 4 & 5 & 24 & 26 & 49 & Yes \\
6 & 4 & 6 & 25 & 28 & 19 & Yes \\
6 & 4 & 7 & 26 & 32 & 1184 & Yes \\
\hline
6 & 5 & 3 & 38 & 40 & 1 &  \\
6 & 5 & 4 & \textbf{44} & \textbf{44} & 2 &  \\
\hline
6 & 6 & 3 & \textbf{62} & \textbf{62} & 1 & Yes \\
\hline
7 & 4 & 2 & \textbf{14} & \textbf{14} & 1 & Yes \\
7 & 4 & 3 & \textbf{16} & \textbf{16} & 1 & Yes \\
7 & 4 & 4 & \textbf{28} & \textbf{28} & $\geq$ 2987496 & All found graphs are bipartite \\
7 & 4 & 5 & 28 & 30 & 113 & Yes \\
\hline
7 & 5 & 2 & \textbf{50} & \textbf{50} & 1 &  \\
\bottomrule
\end{tabular}
\caption{Values of $M(k;g,d)$, $n(k;g,d)$, number of $(k;g,d)$-cages and bipartiteness for $(4;6,d)$ with $d\in \{7,8,9\}$, $(4;8,d)$ with $d = 4$, $(5;4,d)$ with $d\in \{2,\ldots,8\}$, $(5;5,d)$ with $d\in \{3,4\}$, $(5;6,d)$ with $d\in \{3,4,5\}$, $(6;4,d)$ with $d\in \{2,\ldots,7\}$, $(6;5,d)$ with $d\in \{3,4\}$, $(6;6,d)$ with $d = 3$, $(7;4,d)$ with $d\in \{2,\ldots,5\}$ and $(7;5,d)$ with $d = 2$. In case $M(k;g,d)=n(k;g,d)$, both values are marked in bold.}
\label{tab:gen5}
\end{table}

\end{document}